\newtheorem{nummer}{ }
\newtheorem{theorem}[nummer]{\bf Theorem}
\newtheorem{proposition}[nummer]{\bf Proposition}
\newtheorem{lemma}[nummer]{\bf Lemma}
\newtheorem{corollary}[nummer]{\bf Corollary}
\newtheorem{definition}[nummer]{\bf Definition}
\begin{document}
\begin{center}
{\LARGE\bf Reversion Porisms in Conics}

\bigskip
\textcolor{black}{%
{\small Lorenz Halbeisen}\\[1.2ex] 
{\scriptsize Department of Mathematics, ETH Zentrum,
R\"amistrasse\;101, 8092 Z\"urich, Switzerland\\ lorenz.halbeisen@math.ethz.ch}}\\[1.8ex]

\textcolor{black}{%
{\small Norbert Hungerb\"uhler}\\[1.2ex] 
{\scriptsize Department of Mathematics, ETH Zentrum,
R\"amistrasse\;101, 8092 Z\"urich, Switzerland\\ norbert.hungerbuehler@math.ethz.ch}}\\[1.8ex]

\textcolor{black}{%
{\small Marco  Schiltknecht}\\[1.2ex] 
{\scriptsize Department of Mathematics, ETH Zentrum,
R\"amistrasse\;101, 8092 Z\"urich, Switzerland\\ marcosc@student.ethz.ch}}
\end{center}
\small{\it key-words\/}: porisms, butterfly theorems, reversion map

\small{\it 2020 Mathematics Subject Classification\/}: {\bf 51M15} (51M09)

\begin{abstract}\noindent
We give a projective proof of the butterfly porism for cyclic quadrilaterals and present a general reversion porism
for polygons with an arbitrary number of vertices on a conic. We also investigate projective properties of the porisms.
\end{abstract}

\section{Introduction}
The theorems of Pappus and Pascal and the Scissors Theorem can be formulated as porisms in the projective plane:
\begin{theorem}[Pappus]\label{thm-pappus}
Let $A_1,A_2,\ldots,A_6$ be a Pappus hexagon on the lines $\ell_1,\ell_2$ with intersection points
$P_1,P_2,P_3$ on the Pappus line $\ell$. Then there exists a Pappus hexagon $A'_1,A'_2,\ldots,A'_6$
on $\ell_1,\ell_2$ with the same intersection points $P_1,P_2,P_3$ for any point $A'_1$ on
$\ell_1$ 
(see Figure~\ref{fig-pappus}).
\end{theorem}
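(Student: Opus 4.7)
I would prove Theorem~\ref{thm-pappus} by reducing the porism directly to the classical Pappus theorem. Label so that $A_1,A_3,A_5\in\ell_1$ and $A_2,A_4,A_6\in\ell_2$, with the opposite-side intersections $P_1=A_1A_2\cap A_4A_5$, $P_2=A_2A_3\cap A_5A_6$, $P_3=A_3A_4\cap A_6A_1$; then the six sides of the hexagon pass through $P_1,P_2,P_3,P_1,P_2,P_3$ in cyclic order. Given an arbitrary $A_1'\in\ell_1$, I would build $A_2',\ldots,A_6'$ step by step by forcing the first five of these incidences: $A_2':=A_1'P_1\cap\ell_2$, $A_3':=A_2'P_2\cap\ell_1$, and so on, up to $A_6':=A_5'P_2\cap\ell_2$. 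The porism claim is then precisely that the remaining side $A_6'A_1'$ also passes through $P_3$.

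The key step is a single application of the classical Pappus theorem to the new hexagon $A_1',A_2',\ldots,A_6'$: its three pairs of opposite sides meet in collinear points. By construction two of these intersections equal $P_1$ and $P_2$, so the third, $Q:=A_3'A_4'\cap A_6'A_1'$, lies on the line $P_1P_2=\ell$. Since $A_3'A_4'$ already passes through $P_3\in\ell$ and is not equal to $\ell$ itself, we must have $Q=P_3$, hence $A_6'A_1'$ passes through $P_3$. This closes the new hexagon and proves the porism.

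The main obstacle I foresee is non-degeneracy: one must verify that the iterative construction produces six distinct points forming a bona fide Pappus hexagon, and in particular that $A_3'A_4'\neq\ell$. These are generic conditions excluding only a low-dimensional set of starting positions $A_1'$, and the exceptional cases can be dealt with either by a limiting argument (perturb $A_1'$ slightly and pass to the limit) or by inspecting them directly. Beyond this bookkeeping, the substantive content of the porism is nothing more than the classical Pappus theorem itself.
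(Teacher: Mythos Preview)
Your argument is correct and close in spirit to the paper's, but the packaging differs in a way worth noting. The paper constructs $A_2',\ldots,A_5'$ exactly as you do, then defines $A_6'$ as the intersection of the lines $A_5'P_2$ and $A_1'P_3$, and invokes the Braikenridge--Maclaurin theorem for degenerate conics (the converse of Pappus) to conclude that $A_6'$ lies on the conic $\ell_1\cup\ell_2$, hence on $\ell_2$. You instead place $A_6'$ on $\ell_2$ by fiat (as $A_5'P_2\cap\ell_2$) and then apply Pappus \emph{forward} to the new hexagon to force the last side $A_6'A_1'$ through $P_3$. So the two proofs swap which of the two final incidences is assumed and which is deduced, and correspondingly swap Pappus for its converse. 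Your version has the mild advantage of being self-contained---it needs only the classical Pappus statement and not the Braikenridge--Maclaurin formulation---while the paper's version makes the parallel with the Pascal porism (Theorem~\ref{thm-pascal}) more transparent, since both are one-line applications of Braikenridge--Maclaurin. Your handling of the degenerate starting positions (where some constructed side coincides with $\ell$ or two opposite sides coincide) by perturbation is standard and matches how the paper treats such cases elsewhere.
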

The cases when $A_1'$ is the intersection of $\ell_1$ with $\ell$ or $\ell_2$ are considered as degenerate situations.
\begin{proof}[Proof of Theorem\;\ref{thm-pappus}]
By the Theorem of Pappus, applied to the hexagon $A_1,\ldots,A_6$, the points $P_1,P_2,P_3$ are collinear.
Then the Braikenridge-Maclaurin Theorem for degenerate conics (see, e.g.,~\cite[p.~76]{coxeter}) applied to the points $A'_1,A'_2,\ldots,A'_5$ and the points $P_1,P_2,P_3$
implies that $A'_6$ lies on $\ell_2$.
\end{proof}
\begin{figure}[ht!]
\begin{center}
\includegraphics{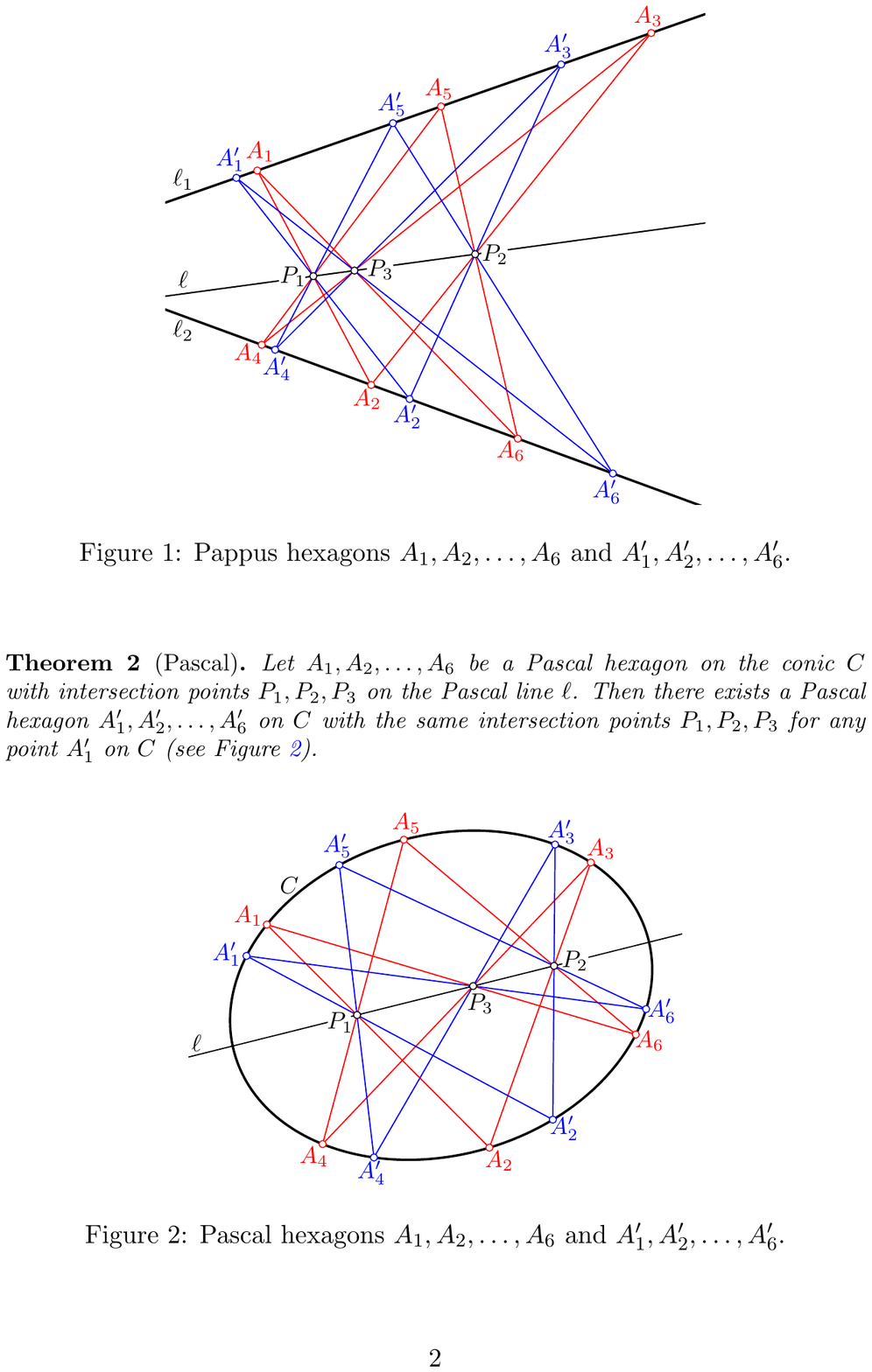}
\caption{Pappus hexagons $A_1,A_2,\ldots,A_6$ and $A'_1,A'_2,\ldots,A'_6$.}\label{fig-pappus}
\end{center}
\end{figure}

\begin{theorem}[Pascal]\label{thm-pascal}
Let $A_1,A_2,\ldots,A_6$ be a Pascal hexagon on the conic $C$ with intersection points
$P_1,P_2,P_3$ on the Pascal line $\ell$. Then there exists a Pascal hexagon $A'_1,A'_2,\ldots,A'_6$
on $C$ with the same intersection points $P_1,P_2,P_3$ for any point $A'_1$ on
$C$ 
(see Figure~\ref{fig-pascal}).
\end{theorem}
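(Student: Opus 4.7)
The plan is to mimic the proof of Theorem~\ref{thm-pappus}, replacing the degenerate line pair $\ell_1\cup\ell_2$ by the smooth conic $C$. Given any $A'_1\in C$, I would construct the remaining vertices inductively by iterated chord intersections: let $A'_2$ be the second intersection of the line $A'_1 P_1$ with $C$, then $A'_3$ the second intersection of $A'_2 P_2$ with $C$, and analogously $A'_4$ from $A'_3 P_3$ and $A'_5$ from $A'_4 P_1$. The final vertex $A'_6$ is forced by the two conditions $A'_6\in A'_5 P_2$ and $A'_6\in A'_1 P_3$; I would take $A'_6$ to be the intersection of these two lines, with the missing ingredient being that $A'_6$ lies on $C$.

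For the closure step I would proceed exactly as in the Pappus proof. By Pascal's theorem applied to the given hexagon $A_1,\ldots,A_6$, the three points $P_1,P_2,P_3$ are collinear on the Pascal line $\ell$. I would then invoke the Braikenridge--Maclaurin Theorem (the converse of Pascal for non-degenerate conics) applied to the five points $A'_1,\ldots,A'_5\in C$ together with the collinear triple $P_1,P_2,P_3$, to conclude that the resulting $A'_6$ must lie on the unique conic through $A'_1,\ldots,A'_5$, namely $C$ itself. Hence $A'_1,\ldots,A'_6$ is a Pascal hexagon on $C$ with opposite-side intersections $P_1,P_2,P_3$.

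I expect the main obstacle to lie not in the projective core of the argument but in handling degenerate configurations: for instance, cases in which some chord $A'_i P_j$ is tangent to $C$ (so that the ``second intersection'' collapses onto $A'_i$), in which two of the constructed vertices coincide, or in which a side of the hexagon happens to coincide with $\ell$. As in the remark following Theorem~\ref{thm-pappus}, I would regard these as degenerate situations of the porism. Once they are set aside, the argument is a direct transfer of the Pappus porism proof from the reducible conic $\ell_1\cup\ell_2$ to the irreducible conic $C$.
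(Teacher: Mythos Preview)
Your proposal is correct and follows exactly the same route as the paper: Pascal's theorem gives the collinearity of $P_1,P_2,P_3$, and then the Braikenridge--Maclaurin theorem for nondegenerate conics, applied to $A'_1,\ldots,A'_5$ and the collinear triple $P_1,P_2,P_3$, forces $A'_6\in C$. The paper states this in two sentences without spelling out the chord-by-chord construction or the degeneracy caveats, but the argument is identical.
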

\begin{figure}[ht!]
\begin{center}
\includegraphics{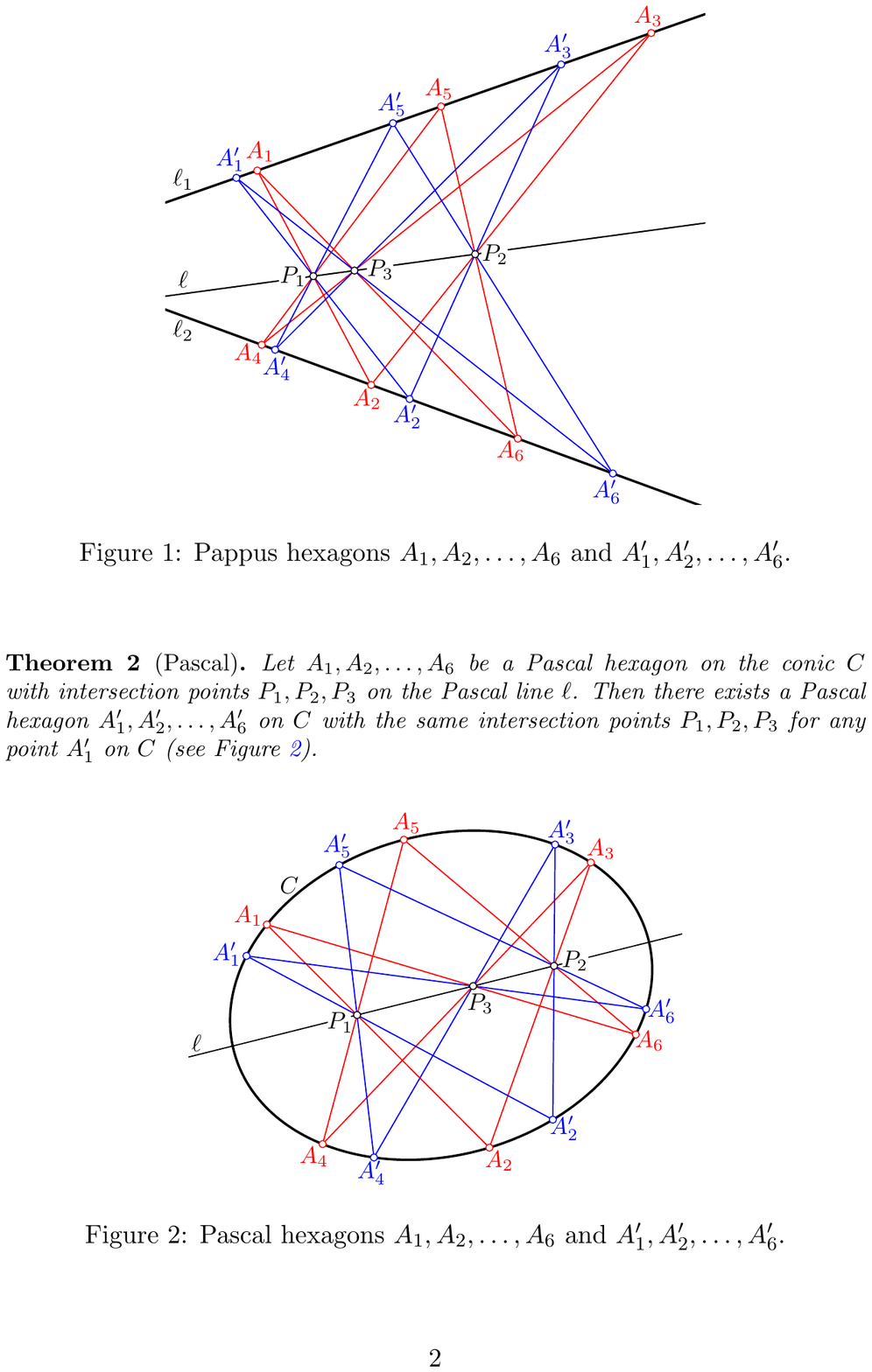}
\caption{Pascal hexagons $A_1,A_2,\ldots,A_6$ and $A'_1,A'_2,\ldots,A'_6$.}\label{fig-pascal}
\end{center}
\end{figure}

\noindent Again, we consider the case when $A_1'$ is on $\ell$ as a degenerate situation.
\begin{proof}[Proof of Theorem\;\ref{thm-pascal}]
By the Theorem of Pascal, applied to the hexagon $A_1,\ldots,A_6$, the points $P_1,P_2,P_3$ are collinear.
Then the Braikenridge-Maclaurin Theorem for nondegenerate conics applied to the points $A'_1,A'_2,\ldots,A'_5$ and the points $P_1,P_2,P_3$
implies that $A'_6$ lies on $C$.
\end{proof}

\begin{theorem}[Scissors Theorem]\label{thm-scissors}
Let $A_1,A_2,A_3,A_4$ be a Scissors quadrilateral on the lines $\ell_1,\ell_2$ with intersection points
$P_1,P_2,P_3,P_4$ on a  line $\ell$. Then there exists a Scissors quadrilateral $A'_1,A'_2,A'_3,A'_4$
on $\ell_1,\ell_2$ with the same intersection points $P_1,P_2,P_3,P_4$ for any point $A'_1$ on
$\ell_1$ 
(see Figure~\ref{fig-scissors}).
\end{theorem}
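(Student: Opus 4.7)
The plan is to encode the closing-up construction as a self-projectivity of $\ell_1$ and to show, via the fundamental theorem of projective geometry, that this projectivity is the identity. Concretely, for any $A_1'\in\ell_1$ I define successively $A_2':=\ell_2\cap A_1'P_1$, $A_3':=\ell_1\cap A_2'P_2$, $A_4':=\ell_2\cap A_3'P_3$, and $A_1'':=\ell_1\cap A_4'P_4$. The assignment $\varphi:A_1'\mapsto A_1''$ is a composition of four perspectivities with centers $P_1,P_2,P_3,P_4$, hence a projectivity $\ell_1\to\ell_1$; the porism is equivalent to the identity $\varphi=\mathrm{id}_{\ell_1}$.

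To obtain three fixed points of $\varphi$, I would argue as follows. The original quadrilateral gives $\varphi(A_1)=A_1$ by construction. Letting $Q:=\ell_1\cap\ell_2$, every perspectivity with center $P_i$ fixes $Q$, because the line $QP_i$ meets the opposite reference line only at $Q$ itself, whence $\varphi(Q)=Q$. Letting $R:=\ell_1\cap\ell$, I observe that $R,P_1,\dots,P_4$ all lie on $\ell$; hence the line $RP_1$ coincides with $\ell$ and the perspectivity with center $P_1$ sends $R$ to $S:=\ell\cap\ell_2$, while the perspectivity with center $P_2$ sends $S$ back to $R$ by the same reasoning. Applying this pairing argument once more for the centers $P_3,P_4$ yields $\varphi(R)=R$.

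Provided $A_1,Q,R$ are pairwise distinct, the fundamental theorem forces $\varphi=\mathrm{id}_{\ell_1}$, so $A_1''=A_1'$ for every $A_1'\in\ell_1$ and the quadrilateral $A_1'A_2'A_3'A_4'$ closes with the prescribed intersection points. The main subtlety, which I expect to be the only real obstacle, is the non-degeneracy bookkeeping: one must use that a genuine scissors quadrilateral has $A_1\neq Q$ and $A_1\notin\ell$, and the starting positions $A_1'=Q$ or $A_1'=R$ in the porism have to be treated as degenerate, in direct analogy with the exclusions noted after Theorems~\ref{thm-pappus} and~\ref{thm-pascal}.
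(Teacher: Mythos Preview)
Your argument is sound in the generic case and takes a genuinely different route from the paper. The paper passes to the affine chart obtained by deleting $\ell$: there the condition $P_i\in\ell$ reads $A_iA_{i+1}\parallel A_i'A_{i+1}'$, and one simply observes that $A_1'A_2'A_3'A_4'$ is the image of $A_1A_2A_3A_4$ under a translation (when $\ell_1,\ell_2,\ell$ are concurrent) or a homothety (otherwise). Your projectivity-with-three-fixed-points argument stays entirely in the projective setting and has the pleasant feature that the same template carries over verbatim to the conic case of Theorem~\ref{thm-kocik}, where no such affine shortcut is available.

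There is, however, one gap. You require $A_1,Q,R$ to be pairwise distinct, and you justify $A_1\neq Q$ and $A_1\neq R$ from the non-degeneracy of the initial quadrilateral, but you never address $Q\neq R$. That condition fails precisely when $\ell_1,\ell_2,\ell$ are concurrent, a configuration the theorem does \emph{not} exclude (it is exactly the translation case in the paper's proof). In that situation your method yields only two distinct fixed points of $\varphi$, which is not enough. A clean repair: when $Q=R$, each two-step composite $\ell_1\xrightarrow{P_{2k-1}}\ell_2\xrightarrow{P_{2k}}\ell_1$ has $Q$ as its \emph{only} fixed point (any other fixed point $X$ would force the line through $X$ and its image to contain both $P_{2k-1}$ and $P_{2k}$, hence $X\in\ell\cap\ell_1=\{Q\}$), so it is parabolic with centre $Q$; the composite $\varphi$ of two parabolics with the same centre is again parabolic at $Q$ or the identity, and the extra fixed point $A_1\neq Q$ forces the latter.
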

\begin{figure}[ht!]
\begin{center}
\includegraphics{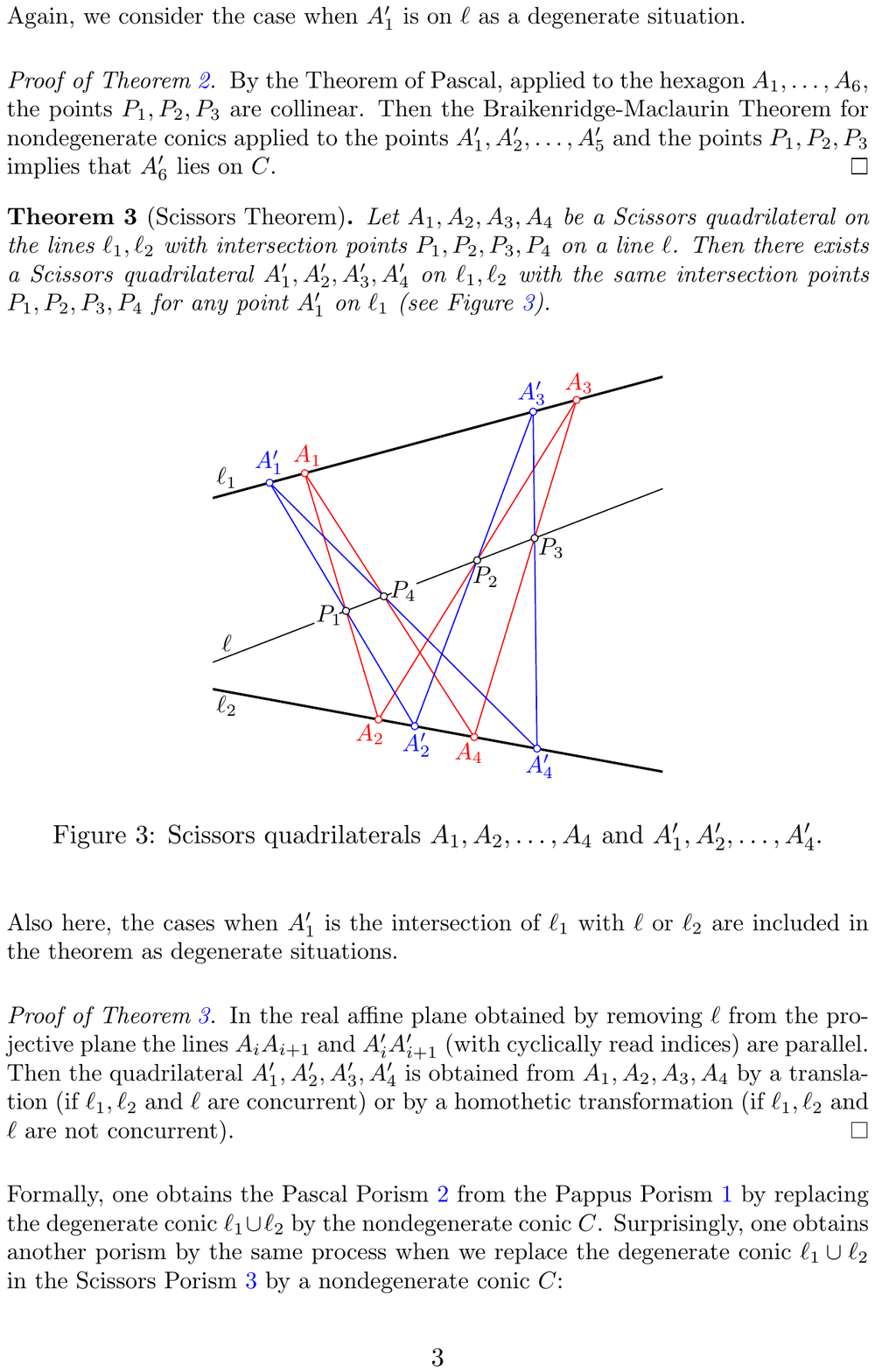}
\caption{Scissors quadrilaterals $A_1,A_2,\ldots,A_4$ and $A'_1,A'_2,\ldots,A'_4$.}\label{fig-scissors}
\end{center}
\end{figure}

\noindent Also here, the cases when $A_1'$ is the intersection of $\ell_1$ 
with $\ell$ or $\ell_2$ are included in the theorem as  degenerate situations.
\begin{proof}[Proof of Theorem\;\ref{thm-scissors}]
In the real affine plane obtained by removing $\ell$ from the projective plane
the lines $A_iA_{i+1}$ and $A'_iA'_{i+1}$ (with  cyclically read indices) are parallel.
Then the quadrilateral $A'_1,A'_2,A'_3,A'_4$ is
obtained from $A_1,A_2,A_3,A_4$  by a translation (if $\ell_1,\ell_2$ and $\ell$ are concurrent)
or by a homothetic transformation (if $\ell_1,\ell_2$ and $\ell$ are not concurrent).
\end{proof}

Formally, one obtains the Pascal Porism~\ref{thm-pascal} from the Pappus Porism~\ref{thm-pappus}
by replacing the degenerate conic $\ell_1\cup\ell_2$ by the nondegenerate conic $C$.
Surprisingly, one obtains another porism by the same process when we replace the 
degenerate conic $\ell_1\cup\ell_2$ in the Scissors Porism~\ref{thm-scissors} by a nondegenerate
conic $C$:
\begin{theorem}[Butterfly Theorem, Jones \cite{jones}, Kocik \cite{Kocik}]\label{thm-kocik}
Let $A_1,A_2,A_3,A_4$ be a quadrilateral on the conic $C$ with intersection points
$P_1,P_2,P_3,P_4$ on a  line $\ell$. Then there exists a quadrilateral $A'_1,A'_2,A'_3,A'_4$
on $C$ with the same intersection points $P_1,P_2,P_3,P_4$ for any point $A'_1$ on
$C$ 
(see Figure~\ref{fig-kocik}).
\end{theorem}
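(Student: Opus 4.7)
The plan is to invoke Desargues' involution theorem on two conic pencils, in the same spirit as the Braikenridge--Maclaurin arguments used above. Recall that Desargues' theorem asserts that a pencil of conics cuts any line (not belonging to the base locus) in an involution on that line, whose pairs are the intersections of the individual conics with the line. Crucially, an involution on a projective line is determined by any two disjoint pairs.

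First, I would apply this to the pencil of conics through $A_1,A_2,A_3,A_4$. Besides $C$, the pencil contains the three degenerate members $(A_1A_2)\cup(A_3A_4)$, $(A_1A_3)\cup(A_2A_4)$, and $(A_1A_4)\cup(A_2A_3)$. Reading off their intersections with $\ell$, the resulting involution $\iota$ on $\ell$ pairs the two points of $C\cap\ell$, pairs $P_1$ with $P_3$, and pairs $P_2$ with $P_4$.

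Given an arbitrary $A_1'$ on $C$, define $A_2',A_3',A_4'$ successively as the second intersections of the lines $P_1A_1'$, $P_2A_2'$, $P_3A_3'$ with $C$; it then remains to show that $A_4'A_1'$ passes through $P_4$. Since any four distinct points on a nondegenerate conic are in general position, the pencil of conics through $A_1',A_2',A_3',A_4'$ is well defined and contains $C$ together with the degenerate conic $(A_1'A_2')\cup(A_3'A_4')$, whose two line components pass through $P_1$ and $P_3$ respectively by construction. Hence the new involution $\iota'$ on $\ell$ produced by Desargues' theorem shares with $\iota$ both the pair $C\cap\ell$ and the pair $\{P_1,P_3\}$, so $\iota'=\iota$. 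Finally, the third degenerate member $(A_1'A_4')\cup(A_2'A_3')$ of the new pencil contributes the pair $\{A_1'A_4'\cap\ell,\,P_2\}$ to $\iota'$; since $\iota(P_2)=P_4$ by the first step, this forces $A_1'A_4'\cap\ell=P_4$, completing the argument.

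The main obstacle I anticipate is the handling of degenerate configurations: for instance when $A_1'$ is a tangent point from $P_1$ so that $A_2'=A_1'$, when two consecutive $A_i'$ coincide, when the four points $A_i'$ fail to determine a pencil of conics, or when $\ell$ is tangent to $C$ and the pair $C\cap\ell$ collapses. In the generic situation the four $A_i'$ are distinct and no three are collinear (being on a nondegenerate conic), so the argument above applies verbatim; the exceptional cases coincide with the degenerate situations already acknowledged in the porism formulations of the introduction, and can be dealt with by continuity or as separate limit cases.
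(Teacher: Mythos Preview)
Your argument via Desargues' involution theorem is correct and is a genuinely different route from the paper's. The paper introduces the reversion map $\varphi_P:C\to C$, extends it to a projective involution of $\mathbb R\!\operatorname{P}^2$ with an explicit matrix $M_P$, and then proves algebraically (Lemma~\ref{lem-kocik}) that for collinear $U,V,W\notin C$ there is a unique $X$ on the same line with $M_X=M_WM_VM_U$; applying this with $U,V,W=P_1,P_2,P_3$ forces $X=P_4$ and hence $\varphi_{P_4}\circ\cdots\circ\varphi_{P_1}=\operatorname{id}$, so the quadrilateral closes from any starting point. Your approach replaces this computation by a purely synthetic argument: the two conic pencils (through the $A_i$ and through the $A_i'$) both contain $C$, hence both Desargues involutions on $\ell$ share the pair $C\cap\ell$ as well as $\{P_1,P_3\}$, so they coincide, and the remaining degenerate member forces $A_1'A_4'\cap\ell=P_4$. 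What the paper's approach buys is uniformity: the matrix identity holds regardless of whether $\ell$ is secant, tangent, or exterior to $C$, and it sets up the reversion-map machinery used throughout the rest of the paper. What your approach buys is a coordinate-free classical proof tied directly to Pascal/Desargues.

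One small point you should make explicit: among your listed degenerate cases you mention the tangent case ``when $\ell$ is tangent to $C$ and the pair $C\cap\ell$ collapses,'' but you do not mention the case where $\ell$ misses $C$ entirely. In that case $C\cap\ell$ is a pair of complex conjugate points, so over $\mathbb R$ the conic $C$ contributes no visible pair to the involution, and your ``two shared pairs'' argument for $\iota'=\iota$ needs a word of justification. The fix is immediate---either pass to the complexification (the Desargues involution is real, and its complex extension still swaps the two points of $C\cap\ell$), or invoke continuity as you already do for the other borderline cases---but since the paper emphasises that its projective proof closes precisely this kind of gap left by the earlier M\"obius-transformation arguments, it is worth saying so.
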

\begin{figure}[ht!]
\begin{center}
\includegraphics{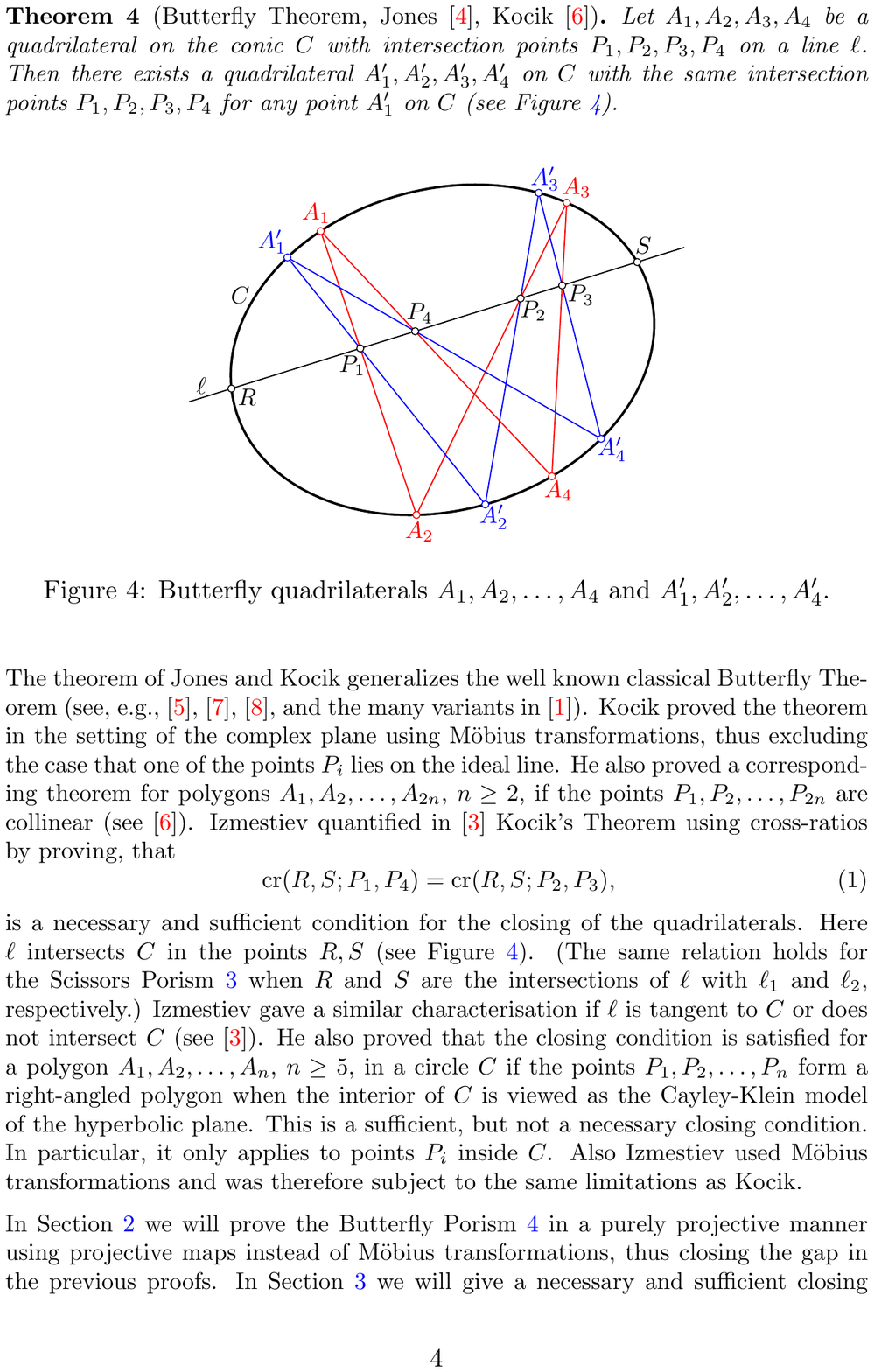}
\caption{Butterfly quadrilaterals $A_1,A_2,\ldots,A_4$ and $A'_1,A'_2,\ldots,A'_4$.}\label{fig-kocik}
\end{center}
\end{figure}
The theorem of Jones and Kocik generalizes the well known classical Butterfly 
Theorem (see, e.g.,~\cite{Klamkin}, \cite{Ana}, \cite{Volenec},
and the many variants in~\cite{ctk}).
Kocik proved the theorem in the setting of the complex plane using M\"obius transformations, thus excluding the case that
one of the points $P_i$ lies on the ideal line. He also proved a corresponding theorem for
polygons $A_1,A_2,\ldots,A_{2n}$, $n\ge 2$, if the points $P_1,P_2,\ldots,P_{2n}$ are collinear (see~\cite{Kocik}).
Izmestiev quantified in~\cite{Izmestiev} Kocik's Theorem using cross-ratios by proving, that
\begin{equation}\label{eq-DV}
\operatorname{cr}(R,S;P_1,P_4) =\operatorname{cr}(R,S;P_2,P_3),
\end{equation}
is a necessary and sufficient condition for the closing of the quadrilaterals.
Here $\ell$ intersects $C$ in the points $R,S$ (see Figure~\ref{thm-kocik}). 
(The same relation holds for the Scissors Porism~\ref{thm-scissors} when $R$ and $S$ are the intersections of $\ell$ with
$\ell_1$ and $\ell_2$, respectively.)
Izmestiev gave a similar characterisation if $\ell$ is tangent to $C$ or does not intersect $C$ (see~\cite{Izmestiev}).
He also proved that the closing condition is satisfied for a polygon $A_1,A_2,\ldots,A_n$, $n\ge 5$, in a circle $C$
if the points $P_1,P_2,\ldots,P_n$ form a right-angled polygon when the interior of $C$
is viewed as the Cayley-Klein model of the hyperbolic plane. This is a sufficient, but not
a necessary closing condition. In particular, it only applies to points $P_i$ inside $C$. Also Izmestiev used
M\"obius transformations  and was therefore subject to the same limitations as Kocik.

In Section~\ref{sec-kocik} we will prove the Butterfly Porism~\ref{thm-kocik} in a purely projective manner
using projective maps instead of M\"obius transformations, thus closing the gap in the previous proofs. 
In Section~\ref{sec-general} we will give a necessary and sufficient closing condition for the points  $P_1,\ldots,P_n$, $n\ge 3$:
It will turn out that the condition is a consequence of Pascal's Theorem. In particular,
given an $n$-gon $A_1,A_2,\ldots,A_n$ inscribed in a conic $C$ with points $P_1,\ldots,P_{n-2}$ such that $P_i$ lies on 
the line $A_iA_{i+1}$, there are unique points $P_{n-1}$ on  $A_{n-1}A_n$ and $P_n$ on  $A_nA_1$
such that the closing condition is satisfied. I.e., there is an $n$-gon $A_1'A_2'\ldots A_n'$ on $C$ with
sides running successively through the points $P_i$ for any starting point $A_1'$ on $C$. 
The points $P_{n-1}$ and $P_n$ can
easily be constructed by ruler alone.

\section{The Butterfly Porism}\label{sec-kocik}
Let $C$ be a nondegenerate conic in the real projective plane $\mathbb R\!\operatorname{P}^2$.
For a point $P\notin C$,
the {\em reversion map\/} $\varphi_P:C\to C, X\mapsto \varphi_P(X)$, is defined by the requirement that
$X,P,\varphi_P(X)$ are collinear, and that $X\neq\varphi_P(X)$ unless $XP$ is a tangent of $C$ (see Figure~\ref{fig-reversion}). 
If convenient, we may always assume without loss of generality that the conic $C$ is given by
$$C: \langle X,X\rangle = 0$$
in projective coordinates $X=(x_1,x_2,x_3)^\top$, where $\langle X,Y\rangle=x_1y_1+x_2y_2-x_3y_3$ denotes
the Minkowski product. In the Euclidean plane $\{(x_1,x_2,1)\mid (x_1,x_2)\in\mathbb R^2\}$,
embedded in the projective plane $\mathbb R\!\operatorname{P}^2$,  $C$ is the unit circle. 
\begin{figure}[ht!]
\begin{center}
\includegraphics{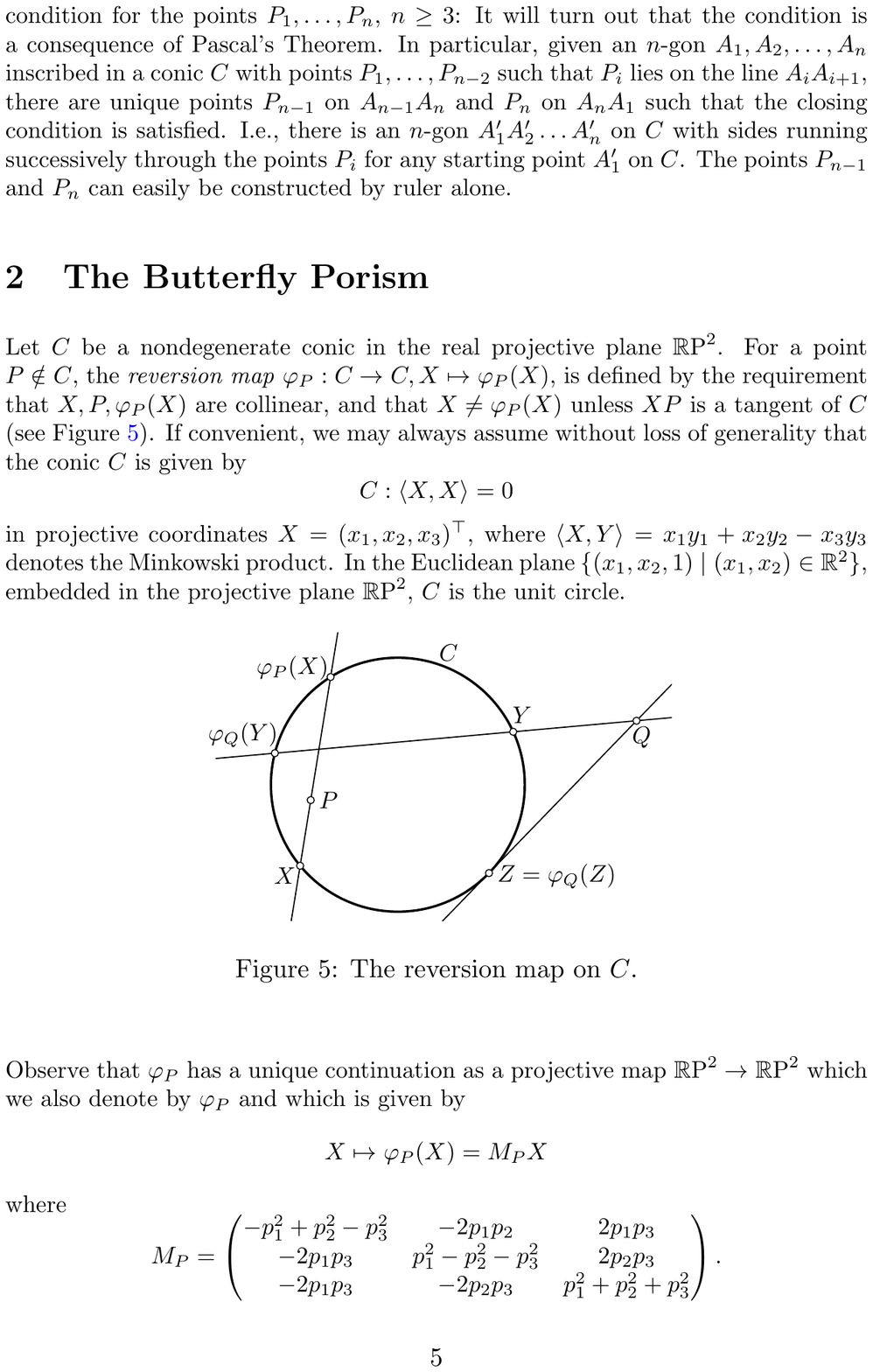}
\caption{The reversion map on $C$.}\label{fig-reversion}
\end{center}
\end{figure}

Observe that $\varphi_P$ has a unique continuation
as a projective map $\mathbb R\!\operatorname{P}^2\to\mathbb R\!\operatorname{P}^2$ 
which we also denote by $\varphi_P$ and
which is given by
$$
X\mapsto\varphi_P(X)=M_PX
$$
where 
$$
M_P=\begin{pmatrix}
-p_1^2+p_2^2-p_3^2 & -2p_1p_2 & 2 p_1p_3\\
-2p_1p_3 &p_1^2-p_2^2-p_3^2 & 2p_2p_3\\
-2p_1p_3 &-2p_2p_3& p_1^2+p_2^2+p_3^2
\end{pmatrix}.
$$
Note that the polar line of $P$ is a fixed point line of $\varphi_P$ and
the  bundle of lines through $P$ are fixed lines of $\varphi_P$. Moreover, $\varphi_P$ is an involution.
\begin{lemma}\label{lem-kocik}
If $U,V,W\in \mathbb R\!\operatorname{P}^2\setminus C$ are collinear, then there is a unique
point $X \in \mathbb R\!\operatorname{P}^2\setminus C$ on the same line as $U,V,W$ such that
$\varphi_X\circ\varphi_W\circ\varphi_V\circ\varphi_U=\operatorname{id}$, i.e.,
the identity.
\end{lemma}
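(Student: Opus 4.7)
My approach is to reduce the claim to the action on $C\cong \mathbb R\!\operatorname{P}^1$, where each reversion $\varphi_P$ restricts to a projective involution, and then exploit the classification of such involutions by their fixed-point pair.

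The first observation is that the collinearity of $U,V,W$ on a common line $\ell$ forces the three involutions $\varphi_U|_C$, $\varphi_V|_C$, $\varphi_W|_C$ to share $\{R,S\}:=C\cap\ell$ as an invariant set. In the generic case $R\neq S$ (possibly complex conjugate), each $\varphi_P|_C$ with $P\in\ell$ actually \emph{swaps} $R$ and $S$, since $\varphi_P(R)$ must lie on the line $PR=\ell$. In an affine coordinate on $C$ sending $R\mapsto 0$ and $S\mapsto\infty$, every such swapping involution takes the form $z\mapsto c/z$ for a unique $c\neq 0$, and $c$ depends bijectively on the point $P\in\ell\setminus C$.

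The main computation is then
\[
\bigl(z\mapsto\tfrac{c_3}{z}\bigr)\circ\bigl(z\mapsto\tfrac{c_2}{z}\bigr)\circ\bigl(z\mapsto\tfrac{c_1}{z}\bigr)=\bigl(z\mapsto\tfrac{c_1c_3}{c_2z}\bigr),
\]
showing that $\tau:=\varphi_W\circ\varphi_V\circ\varphi_U$ restricts on $C$ to a swapping involution of the same form. This identifies $\tau|_C$ with $\varphi_X|_C$ for a unique $X\in\ell\setminus C$, determined by reading off the parameter. I would then promote this equality from $C$ to all of $\mathbb R\!\operatorname{P}^2$ via the standard fact that the group of projective transformations preserving $C$ acts faithfully on $C$ (a map fixing four points of $C$ in general position must be the identity). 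Hence $\varphi_X=\tau$ on $\mathbb R\!\operatorname{P}^2$, giving
\[
\varphi_X\circ\varphi_W\circ\varphi_V\circ\varphi_U=\varphi_X\circ\tau=\varphi_X^2=\operatorname{id},
\]
with $X$ unique by the same faithfulness.

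The tangent case $R=S$ is handled analogously, using the coordinate $R\mapsto\infty$ in which involutions of $C$ fixing $R$ have the form $z\mapsto -z+b$ and again close under three-fold composition. The step that requires real care is identifying the one-parameter family of $\ell$-adapted involutions on $C$ and verifying its closure under three-fold composition; once that is in hand, the rest is bookkeeping and a standard appeal to the faithful action of the conic's projective automorphism group on $C$.
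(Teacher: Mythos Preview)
Your argument is correct and takes a genuinely different route from the paper's. The paper proceeds by direct computation: writing $W=aU+bV$, it exhibits
\[
X=(2a\langle U,V\rangle + b\langle V,V\rangle)\,U - a\langle U,U\rangle\,V,
\]
checks that $\langle X,X\rangle=\langle U,U\rangle\langle V,V\rangle\langle W,W\rangle\neq 0$ so that $X\notin C$, and then verifies the matrix identity $M_X=M_WM_VM_U$ directly. No coordinates on $C$, no case distinction on how $\ell$ meets $C$.

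Your approach instead works intrinsically on $C\cong\mathbb R\!\operatorname{P}^1$: reversions through points of $\ell$ are exactly the projective involutions of $C$ swapping the pair $\{R,S\}=C\cap\ell$, and this one-parameter family is closed under three-fold composition (your $c_1c_3/c_2$ and $b_1-b_2+b_3$ computations make this explicit). The paper's method buys uniformity and an explicit formula for $X$ at the price of an opaque verification; yours explains \emph{why} the lemma holds---it is a coset closure phenomenon in $\operatorname{PGL}_2$---but requires the secant/tangent case split. The one place where your sketch is thin is the non-secant case with complex conjugate $R,S$: the coordinate sending $R\mapsto 0$, $S\mapsto\infty$ is no longer real, so you should add a line noting that $\tau=\varphi_W\varphi_V\varphi_U$ is a real projective map which the complex calculation shows to be an involution, and that every real involution of $C$ arises as $\varphi_X$ for a unique real $X\notin C$ (with $X\in\ell$ since $\tau$ interchanges $R$ and $\bar R$). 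With that remark in place your proof is complete.
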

\begin{proof}
$W$ is given by $W=aU+bV$ for some $a,b\in\mathbb R$. Observe that
$$
\langle U,U\rangle\neq 0,\quad
\langle V,V\rangle\neq 0,\quad
\langle W,W\rangle
\neq 0,
$$
since $U,V,W\notin C$. Set 
$X:=(2a\langle U,V\rangle + b\langle V,V\rangle)U - a\langle U,U\rangle V$. Then
a short calculation shows that
$$
\langle X,X\rangle=\langle U,U\rangle \langle V,V\rangle \langle W,W\rangle \neq 0
$$
and hence $X\notin C$. It is then elementary to
check that $M_X=M_WM_VM_U$.
\end{proof}
The Butterfly Porism~\ref{thm-kocik}  follows immediately from Lemma~\ref{lem-kocik} with $P_1=U, P_2=V, P_3=W$:
Indeed, Lemma~\ref{lem-kocik} guarantees the existence of a point $X$ on the line $\ell$ such that
$A_2=\varphi_{P_1}(A_1),A_3=\varphi_{P_2}(A_2),A_4=\varphi_{P_3}(A_3)$, and $A_1=\varphi_{X}(A_4)$.
Thus, $X=P_4$, and because $\varphi_{P_4}\circ\ldots\circ\varphi_{P_1}=\operatorname{id}$,
the path closes for any starting point $A'_1$ on $C$.

Is it possible that a closing theorem for quadrilaterals in a conic $C$ holds
if the points $P_1,\ldots,P_4$ are {\em not\/} collinear? The answer is no:
\begin{theorem}
Let $A_i,B_i,C_i,D_i$ for $i=1,2,3$ be three quadrilaterals with vertices on a conic $C$ with points 
$P_1$ on  $A_i B_i$,
$P_2$ on $B_i C_i$,
$P_3$ on $C_i D_i$,
$P_4$ on $D_i A_i$, for $i=1,2,3$. Assume that the four points $P_j$ are not on $C$.
Then, $P_1,P_2,P_3,P_4$ are collinear.
\end{theorem}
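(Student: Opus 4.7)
My plan is to translate the existence of three distinct closing quadrilaterals into an operator identity among the reversion maps, and then extract the collinearity of $P_1,\ldots,P_4$ from a fixed-point analysis on $C$. Setting $\varphi_j := \varphi_{P_j}$ and $\Phi := \varphi_4\circ\varphi_3\circ\varphi_2\circ\varphi_1$, the incidence hypotheses give $\varphi_1(A_i)=B_i$, $\varphi_2(B_i)=C_i$, $\varphi_3(C_i)=D_i$, and $\varphi_4(D_i)=A_i$ for $i=1,2,3$, so the three distinct points $A_1,A_2,A_3\in C$ are fixed by $\Phi$. Since $\Phi|_C$ is a projectivity of the rational curve $C\cong\mathbb P^1$, three fixed points force $\Phi|_C=\operatorname{id}$. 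A projective map of $\mathbb R\!\operatorname{P}^2$ that fixes four points of the non-degenerate conic $C$ (which are automatically in general position) must be the identity, and so $\Phi=\operatorname{id}$ on all of $\mathbb R\!\operatorname{P}^2$.

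Exploiting the involutivity of each $\varphi_j$, this rewrites as
$$
\Psi \;:=\; \varphi_2\circ\varphi_1 \;=\; \varphi_3\circ\varphi_4.
$$
I may assume $P_1\neq P_2$ (otherwise collinearity is immediate), so $\Psi|_C$ is a non-trivial projectivity. A point $X\in C$ is fixed by $\Psi$ precisely when $\varphi_1(X)=\varphi_2(X)=:Y$; for such an $X$, the chord $XY$ contains both $P_1$ and $P_2$, so it coincides with $\ell':=P_1P_2$. Conversely, every point of $C\cap\ell'$ is fixed by $\Psi$. Hence the fixed points of $\Psi$ on $C$ are exactly $C\cap\ell'$, and the same argument applied to $\varphi_3\circ\varphi_4$ identifies them with $C\cap\ell''$ for $\ell'':=P_3P_4$.

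A non-identity projectivity of $\mathbb P^1$ has exactly two fixed points counted with multiplicity (over $\mathbb C$), so $\ell'$ and $\ell''$ must meet the complexified conic $C$ in the same scheme of two points, forcing $\ell'=\ell''$. Thus $P_1,P_2,P_3,P_4$ all lie on the line $\ell'$.

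The main technical delicacy I anticipate is handling the cases in which $\ell'$ is tangent to $C$ or fails to meet $C$ within $\mathbb R\!\operatorname{P}^2$. Both are dealt with by the complexified fixed-point count: in the tangent case the unique real fixed point carries multiplicity two and determines $\ell'$ uniquely as the tangent line through it, while in the non-secant case the two complex conjugate fixed points on the complexified conic again determine a unique real line. Everything else is routine manipulation inside the group generated by the reversions.
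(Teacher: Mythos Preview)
Your proof is correct and takes a genuinely different route from the paper's. The paper argues synthetically via Pascal's Theorem: it selects four hexagons inscribed in $C$, built from vertices of the three given quadrilaterals, whose Pascal lines each contain two of the $P_j$ together with one of two auxiliary points $X,Y$; since both the line $P_1P_4$ and the line $P_2P_3$ are then forced to pass through $X$ and through $Y$, the two lines coincide. Your argument instead exploits the reversion-map machinery of Section~2: three closed quadrilaterals force $\Phi|_C=\operatorname{id}$ by sharp $3$-transitivity, whence $\varphi_2\circ\varphi_1=\varphi_3\circ\varphi_4$ on $C$, and identifying the fixed-point scheme of this common projectivity of $C$ with both $C\cap P_1P_2$ and $C\cap P_3P_4$ forces the two chords to agree. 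Your approach fits naturally with the rest of the paper---it is in effect a self-contained proof of the final clause of Proposition~9 (that the line through $U,V$ is determined by $\varphi_V\circ\varphi_U$)---and handles the tangent and non-secant cases uniformly via complexification; the paper's approach is more elementary, needing only Pascal, and avoids any appeal to complex points or eigenvalue multiplicities. One small remark: when you dismiss the case $P_1=P_2$ as ``immediate'', you are implicitly using that $\varphi_2\circ\varphi_1=\operatorname{id}$ then forces $\varphi_3=\varphi_4$ and hence $P_3=P_4$, so that only two distinct points remain; this is true but deserves a sentence.
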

\begin{proof}
Consider the following four hexagons
\begin{eqnarray}
&& A_1 B_1 D_2 A_2 B_2 D_1, \label{h1}\\
&& A_1 B_1 D_3 A_3 B_3 D_1, \label{h2}\\
&& C_1 B_1 D_2 C_3 B_2 D_1, \label{h3}\\
&& C_1 B_1 D_3 C_3 B_3 D_1. \label{h4}
\end{eqnarray}
Let $X$ be the intersection of the lines $B_1D_2$ and $B_2D_1$ and
$Y$ the intersection of the lines $B_1D_3$ and $B_3D_1$.
Then, by Pascal's Theorem applied to the four hexagons above, we have:
\begin{eqnarray*}
&&\text{by~(\ref{h1}): the points $P_1,P_4,X$ are collinear} \\
&&\text{by~(\ref{h2}): the points $P_1,P_4,Y$ are collinear} \\
&&\text{by~(\ref{h3}): the points $P_2,P_3,X$ are collinear} \\
&&\text{by~(\ref{h4}): the points $P_2,P_3,Y$ are collinear}
\end{eqnarray*}
Hence, $P_1,P_2,P_3,P_4$ are collinear.
\end{proof}

\section{General reversion porisms in conics}\label{sec-general}
Motivated by the previous section we define:
\begin{definition}
Let $C$ be a conic in the projective plane $\mathbb R\!\operatorname{P}^2$.
The points $P_1,\ldots, P_n$ in $\mathbb R\!\operatorname{P}^2\setminus C$ are said to
satisfy the closing property (in this order) with respect to $C$, 
if $\varphi_{P_n}\circ\ldots\circ\varphi_{P_1}=\operatorname{id}$, i.e., the identity.
\end{definition}
Then we have the following:
\begin{lemma}
Let $C$ be a nondegenerate conic in the real projective plane $\mathbb R\!\operatorname{P}^2$ and
$P_1,\ldots, P_n$  be points in $\mathbb R\!\operatorname{P}^2\setminus C$.
Assume that there are three different $n$-gons $A^{j}_1A^{j}_2\ldots A^{j}_n$, $j=1,2,3,$ 
inscribed in $C$ such that $P_i$ lies on $A_iA_{i+1}$ for $i=1,2,\ldots,n$ (with cyclically read indices).
Then $P_1,\ldots, P_n$ have the closing property with respect to $C$. In particluar
there is a closed $n$-gon starting in any point $A_1\in C$ whose sides run successively through the points $P_i$. 
\end{lemma}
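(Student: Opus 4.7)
The plan is to exploit the fact that each reversion $\varphi_{P_i}$ extends to a projective map of $\mathbb{R}\!\operatorname{P}^2$, so the composition
$$\Phi := \varphi_{P_n}\circ\varphi_{P_{n-1}}\circ\ldots\circ\varphi_{P_1}$$
is itself a projective self-map of $\mathbb{R}\!\operatorname{P}^2$. Since every $\varphi_{P_i}$ preserves $C$ (it sends a point of $C$ to a point of $C$), $\Phi$ preserves $C$ as well, and its restriction $\Phi|_C$ is a projective automorphism of $C$. Identifying $C$ with $\mathbb{R}\!\operatorname{P}^1$, $\Phi|_C$ is thus a Möbius-type map, uniquely determined by its values on three distinct points of $C$.

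Next I would translate the hypothesis about the three inscribed $n$-gons into fixed points of $\Phi|_C$. For each $j\in\{1,2,3\}$ and each $i$, the fact that $P_i$ lies on $A^{j}_iA^{j}_{i+1}$ together with $A^{j}_i, A^{j}_{i+1}\in C$ means exactly that $A^{j}_{i+1}=\varphi_{P_i}(A^{j}_i)$. Composing around the polygon yields $\Phi(A^{j}_1)=A^{j}_1$ for $j=1,2,3$. The three $n$-gons being different forces $A^{1}_1,A^{2}_1,A^{3}_1$ to be three distinct points of $C$: indeed, if two of the starting vertices coincided, iterating the reversions would produce identical polygons. Hence $\Phi|_C$ has three distinct fixed points on $\mathbb{R}\!\operatorname{P}^1\cong C$, and therefore $\Phi|_C=\operatorname{id}$.

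The final step is to upgrade the identity on $C$ to the identity on $\mathbb{R}\!\operatorname{P}^2$. Since $C$ is nondegenerate, any five points on $C$ are in general position (no three collinear), and a projective map of $\mathbb{R}\!\operatorname{P}^2$ fixing four points in general position is the identity. Thus $\Phi=\operatorname{id}$, which is exactly the closing property for $P_1,\ldots,P_n$. Consequently, for any starting vertex $A_1\in C$, setting $A_{i+1}:=\varphi_{P_i}(A_i)$ for $i=1,\ldots,n-1$ yields $\varphi_{P_n}(A_n)=A_1$, so the resulting $n$-gon closes.

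The only delicate point, and the one I would argue carefully, is that three \emph{different} $n$-gons really do produce three \emph{distinct} fixed points on $C$ (and not, say, the same $n$-gon traversed differently). Once that observation is in hand, the rest is a clean application of the two rigidity facts: a projective map of $\mathbb{R}\!\operatorname{P}^1$ is determined by three values, and a projective map of $\mathbb{R}\!\operatorname{P}^2$ is determined by four points in general position.
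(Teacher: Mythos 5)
Your proposal is correct and follows essentially the same route as the paper: the three starting vertices are three distinct fixed points of $\varphi_{P_n}\circ\ldots\circ\varphi_{P_1}$ on $C$, and sharp $3$-transitivity of the projective automorphisms of $C$ (equivalently, your ``determined by three values on $\mathbb{R}\!\operatorname{P}^1$'' fact) forces this map to be the identity. Your additional care in checking that distinct $n$-gons yield distinct starting vertices, and in upgrading the identity from $C$ to all of $\mathbb{R}\!\operatorname{P}^2$, only makes explicit what the paper leaves implicit.
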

\begin{proof}
$A_1^1,A_1^2$ and $A_1^3$ are three different fixed points of the map 
$\varphi_{P_n}\circ\ldots\circ\varphi_{P_1}$.
Thus, the claim follows directly from the fact that the  group of 
projective maps which keep the set $C$ fixed acts sharply 3-transitively on $C$.
\end{proof}
Before we can turn to the main theorem, we need to state the following:
\begin{proposition}\label{prop-ell}
Let $C$ be a nondegenerate conic and  $U\neq V$ be points in $\mathbb R\!\operatorname{P}^2\setminus C$.
Then 
$\varphi=\varphi_V\circ\varphi_U$ 
has the line $\ell=UV$ as a fixed line and its pole $P$ with respect to $C$ as a fixed point. Moreover we have:
\begin{itemize}
\item[(a)] If $\ell$ intersects $C$ in two points $R,S$ then $R,S$ are fixed points of $\varphi$
and the tangents in $R,S$ are fixed lines. Besides $P$ and $\ell$ there are no other fixed points and fixed lines.
\item[(b)] If $\ell$ is tangent to  $C$ or if $\ell$ misses $C$, then $P$ is the only fixed point and $\ell$ the only fixed line.
\end{itemize}
In particular, the line $\ell$ on which $U$ and $V$ sit is determined by the map $\varphi$.
\end{proposition}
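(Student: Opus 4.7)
The plan splits into three stages.

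\emph{Stage 1: $\ell$ and $P$ are fixed.} I would use that each reversion map $\varphi_W$ is the harmonic homology with center $W$ and axis equal to the polar of $W$, as recorded right after the definition of $\varphi_P$; consequently $\varphi_W$ fixes every line through $W$ and every point of the polar of $W$. Since $U,V\in\ell=UV$, the line $\ell$ is fixed by each of $\varphi_U$ and $\varphi_V$, hence by $\varphi$. By pole-polar duality, $U\in\ell$ implies that $P$ lies on the polar of $U$, and similarly for $V$, so $\varphi_U(P)=\varphi_V(P)=P$ and thus $\varphi(P)=P$.

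\emph{Stage 2: case (a).} If $\ell$ meets $C$ at $R,S$, then the line $UR$ equals $\ell$ and meets $C$ again in $S$, giving $\varphi_U(R)=S$; analogously $\varphi_V(S)=R$. Hence $\varphi$ fixes both $R$ and $S$. Because each $\varphi_W$ preserves $C$, it preserves the pole-polar correspondence; so $\varphi_U$ sends the polar of $R$ (the tangent at $R$) to the polar of $S$ (the tangent at $S$), which $\varphi_V$ sends back. Thus $\varphi$ fixes both tangents as lines.

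\emph{Stage 3: uniqueness and case (b).} The composition $\varphi$ lies in the projective stabilizer of $C$, which is isomorphic to $\operatorname{PGL}(2,\mathbb R)$ via its action on $C\cong\mathbb R\!\operatorname{P}^1$. A short check (applying the argument of Stage 2 to an arbitrary $X\in C$ in place of $R$) shows that the fixed points of $\varphi$ on $C$ are precisely $\ell\cap C$, so the three positions of $\ell$ relative to $C$ (secant, tangent, disjoint) correspond exactly to $\varphi$ being hyperbolic, parabolic, or elliptic. For each class the complete list of real fixed points and lines of $\varphi$ in $\mathbb R\!\operatorname{P}^2$ is determined: the hyperbolic case gives the triangle $\{R,S,P\}$ together with the three sides $\ell$ and the tangents at $R,S$, while the parabolic and elliptic cases give only $P$ and only $\ell$. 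The ``in particular'' claim is then immediate, since $\ell$ is recoverable from $\varphi$ as the unique non-tangent fixed line in case (a) and as the unique fixed line in case (b).

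The main obstacle will be the uniqueness in case (a), where one must rule out a collapse in which $\ell$ itself becomes pointwise fixed. If it were, $\varphi$ would be a non-identity homology with axis $\ell$ preserving $C$; matching $\varphi$ against $\varphi_P$ on all of $C$ then forces $\varphi=\varphi_P$ and hence $\{U,V,P\}$ to be a self-polar triangle, the degenerate configuration implicitly excluded by the proposition. Outside this locus, the non-collinear fixed triple $\{P,R,S\}$ together with any hypothetical further fixed point completes a projective frame and forces $\varphi=\operatorname{id}$, contradicting $U\neq V$.
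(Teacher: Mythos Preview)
Your approach differs from the paper's: the paper reduces everything to counting real eigenvectors of $M_VM_U$ and of its inverse transpose, invoking a ``short calculation'' for case~(b), whereas you argue geometrically via the harmonic-homology description of each $\varphi_W$ and then classify $\varphi$ through its restriction to $C\cong\mathbb R\!\operatorname{P}^1$ as a hyperbolic, parabolic, or elliptic element of $\operatorname{PGL}(2,\mathbb R)$. Your route is more conceptual and explains \emph{why} the three positions of $\ell$ produce different fixed-point patterns; the paper's route is shorter once one accepts the explicit matrix $M_P$.

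That said, the obstacle you flag in your final paragraph is a genuine gap, not an implicitly excluded configuration. Nothing in the hypotheses $U\neq V$, $U,V\notin C$ prevents $U$ and $V$ from being conjugate with respect to $C$; when they are, $\{U,V,P\}$ is self-polar and $\varphi=\varphi_V\circ\varphi_U=\varphi_P$ is the harmonic homology with center $P$ and axis $\ell$. Then \emph{every} point of $\ell$ is fixed and \emph{every} line through $P$ is a fixed line, so the uniqueness assertions in~(a) and in the $\ell\cap C=\varnothing$ half of~(b) fail as stated. Concretely, with $C:\langle X,X\rangle=0$, $U=(0,0,1)$, $V=(1,0,0)$ one finds $M_VM_U=\operatorname{diag}(1,-1,1)$, the line $\ell=\{x_2=0\}$ is secant to $C$ and is pointwise fixed. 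The paper's proof has the same blind spot: the phrase ``at most three real eigenvectors'' tacitly assumes distinct eigenvalues, which breaks exactly here. The saving grace is that the ``in particular'' conclusion survives the degeneracy, since even when $\varphi=\varphi_P$ the line $\ell$ is still recoverable as the axis of the homology; and that conclusion is all that Theorem~\ref{thm-main} actually uses.
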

\begin{proof}
The fixed points of $\varphi$ are the real eigenvectors of $M=M_VM_U$ and the fixed lines
are the real eigenvectors of $M^{-\top}$ (the inverse transposed of $M$). In case (a) it is 
geometrically clear, that $R,S,P$ are fixed points and that $\ell$ and the tangents in $R,S$ are
fixed lines. Since we have at most three real eigenvectors, there are no other fixed points or lines.
In case (b) a short calculation shows, that $P$ is a triple eigenvector of $M$, and $\ell$ a triple
eigenvector of $M^{-\top}$ if $\ell$ is tangent to $C$. If $\ell$ misses $C$, there is only one real eigenvalue of $M$, namely $P$, 
and only one real eigenvalue of $M^{-\top}$, namely $\ell$.
\end{proof}
Now we are ready for the main theorem.
\begin{theorem}\label{thm-main}
Let $C$ be a nondegenerate conic and $P_1, \ldots,P_{n-2}$ be given points in 
$\mathbb R\!\operatorname{P}^2\setminus C$.
Then the following is true:
\begin{itemize}
\item If $P_1, \ldots,P_{n-2}$ have the closing property with respect to $C$, then
$P_1, \ldots,P_{n-2},$ $P_{n-1},P_n$ have the closing property 
if and only if $P_{n-1}=P_n$ is an arbitrary point in $\mathbb R\!\operatorname{P}^2\setminus C$.
\item  If $P_1, \ldots,P_{n-2}$ do not have the closing property with respect to $C$, \textcolor{black}{then} either
$\varphi=\varphi_{P_{n-2}}\circ\ldots\circ\varphi_{P_1}$ has a unique fixed line $\ell$ which intersects $C$ in two points, or
$\varphi$ has only one fixed line $\ell$ at all. 
For an arbitrary point $P_{n-1}\in \ell \setminus C$
there is a unique point $P_{n}\in \ell\setminus C$ such that $P_1, \ldots,P_{n-2},P_{n-1},P_n$ have the closing property.
No other choice for $P_{n-1}$ and $P_n$ is possible.
\end{itemize}
\end{theorem}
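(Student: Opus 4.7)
The plan is to reformulate the closing condition as the single equation
\[
\varphi_{P_n}\circ\varphi_{P_{n-1}}=\varphi^{-1},\qquad \varphi:=\varphi_{P_{n-2}}\circ\ldots\circ\varphi_{P_1},
\]
and then to read off the answer using Proposition~\ref{prop-ell}. For the first bullet, $\varphi=\operatorname{id}$, so the equation collapses to $\varphi_{P_n}=\varphi_{P_{n-1}}$. Since the map $P\mapsto\varphi_P$ is injective on $\mathbb R\!\operatorname{P}^2\setminus C$---for instance, $P$ is the unique isolated fixed point of $\varphi_P$ off its pointwise-fixed polar---this is equivalent to $P_{n-1}=P_n$, and any coincident pair off $C$ closes the sequence.

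For the second bullet assume $\varphi\neq\operatorname{id}$. \emph{Uniqueness of $\ell$} is Proposition~\ref{prop-ell} read in reverse: any valid pair $(P_{n-1},P_n)$ of distinct points yields, via that proposition applied to $\varphi_{P_n}\circ\varphi_{P_{n-1}}$, a fixed line $\ell=P_{n-1}P_n$ of $\varphi^{-1}$ which is either the unique fixed line meeting $C$ in two points (case (a)) or the only fixed line (case (b)). Since $\varphi$ and $\varphi^{-1}$ share the same fixed lines, $\ell$ is intrinsic to $\varphi$, establishing the dichotomy and the ``no other $\ell$ is possible'' clause. For \emph{existence and parametrization by $P_{n-1}$}, fix an arbitrary $P_{n-1}\in\ell\setminus C$ and set
\[
\psi:=\varphi^{-1}\circ\varphi_{P_{n-1}}.
\]
I would show that $\psi$ is a non-identity involution preserving $C$; since every such involution in the projective group fixing $C$ is a reversion, $\psi=\varphi_{P_n}$ for a unique $P_n\in\mathbb R\!\operatorname{P}^2\setminus C$. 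The center $P_n$ lies on $\ell$ because $\psi$ preserves $\ell$ (both $\varphi^{-1}$ and $\varphi_{P_{n-1}}$ do, the latter because $P_{n-1}\in\ell$). The key identity $\psi^2=\operatorname{id}$ amounts to the conjugation relation $\varphi_{P_{n-1}}\circ\varphi\circ\varphi_{P_{n-1}}=\varphi^{-1}$, which I would verify by a direct matrix computation in coordinates adapted to $\ell$: geometrically, the polar of $P_{n-1}$ passes through the pole of $\ell$ (because $P_{n-1}\in\ell$), and reflection across such a line inverts $\varphi$, as can be checked separately in the secant, tangent, and external subcases.

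The most delicate point is the degenerate situation in which $\varphi$ itself equals a single reversion $\varphi_Q$---possible already for $n-2=1$, and arising (by Lemma~\ref{lem-kocik}) whenever three of the $P_i$ are collinear. Then $\varphi$ has the richer fixed-line structure of a reversion, namely the polar of $Q$ together with the pencil through $Q$, so Proposition~\ref{prop-ell} does not single out $\ell$ directly. The resolution is to force $\ell$ to be the polar of $Q$, which falls into case (a) or (b) according as $Q$ is exterior or interior to $C$; the conjugation identity then reduces to the fact that $\varphi_{P_{n-1}}$ and $\varphi_Q$ commute whenever $P_{n-1}$ lies on the polar of $Q$. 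Ruling out the other fixed lines of $\varphi_Q$ as candidate closing lines---e.g., a product of two reversions with centers on a line through $Q$ swaps the two points where that line meets $C$, whereas $\varphi_Q$ fixes them---completes the uniqueness argument and is, in my view, the most careful part of the proof.
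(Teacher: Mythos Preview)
Your approach is genuinely different from the paper's and, with one logical repair, is sound. The paper argues synthetically: it picks three starting points $A_1,A_1',A_1''$, runs the chains out to $A_{n-1},A_{n-1}',A_{n-1}''$, \emph{constructs} $\ell$ as the line through the two Pascal points $X=A_1A_{n-1}'\cap A_1'A_{n-1}$ and $Y=A_1A_{n-1}''\cap A_1''A_{n-1}$, and then closes the polygons via the Braikenridge--Maclaurin theorem; only \emph{afterwards} does it invoke Proposition~\ref{prop-ell} to certify that the line so constructed is intrinsic to $\varphi$. You instead work inside the projective automorphism group of $C$: reduce to $\varphi_{P_n}\circ\varphi_{P_{n-1}}=\varphi^{-1}$, locate $\ell$ as the distinguished fixed line of $\varphi$, and for $P_{n-1}\in\ell$ prove that $\psi=\varphi^{-1}\circ\varphi_{P_{n-1}}$ is an involution via the conjugation identity $\varphi_{P_{n-1}}\,\varphi\,\varphi_{P_{n-1}}=\varphi^{-1}$. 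This is conceptually cleaner and explains \emph{why} $\ell$ is the right line; the paper's route, in exchange, yields an explicit ruler construction of $\ell$ and of $P_n$ from $P_{n-1}$.

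The gap in your write-up is a circularity in how $\ell$ enters the existence argument. Your uniqueness paragraph deduces the fixed-line dichotomy for $\varphi$ \emph{from} Proposition~\ref{prop-ell} applied to $\varphi_{P_n}\circ\varphi_{P_{n-1}}$, i.e.\ from the existence of a valid pair---precisely what the next paragraph sets out to prove---and then the existence paragraph opens with ``fix an arbitrary $P_{n-1}\in\ell\setminus C$'' as though $\ell$ were already in hand. To break the loop you must first establish, independently of Proposition~\ref{prop-ell}, that any non-identity $\varphi$ in the group preserving $C$ which is not itself a reversion has exactly the fixed-line pattern of that proposition (a secant $\ell$ plus two tangents; or a single tangent $\ell$; or a single external $\ell$). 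This is the elementary hyperbolic/parabolic/elliptic trichotomy for $PGL_2(\mathbb R)$ acting on $C$, and once it is stated your conjugation argument goes through case by case as you outline. A second ingredient you invoke but the paper never supplies is that every non-identity involution preserving $C$ is some $\varphi_P$; this is classical (an involution on a conic is the perspectivity from its Fr\'egier point), but it should be justified, since it is what lets you name $P_n$ once $\psi^2=\operatorname{id}$ is known. Your treatment of the degenerate case $\varphi=\varphi_Q$ is apt---indeed sharper than the theorem's wording, since there $\varphi$ has infinitely many fixed lines and the literal dichotomy fails; singling out the polar of $Q$ and ruling out the pencil through $Q$, as you do, is exactly what is needed.
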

\begin{proof}
The first case is trivial, since $\varphi_{P_n}\circ\varphi_{P_{n-1}}\circ \ldots\circ\ \varphi_{P_{1}}=\varphi_{P_{n}}\circ\varphi_{P_{n-1}}=
\operatorname{id}$ implies that $\varphi_{P_{n-1}}=\varphi_{P_n}$ and hence $P_{n-1}=P_n$.

In the second case we assume that $\varphi=\varphi_{P_{n-2}}\circ \ldots\circ\varphi_{P_1}\neq\operatorname{id}$.
We start by showing that $P_{n-1}$ and $P_n$ exist as specified in the theorem.
In this case, $\varphi$ has at most two fixed points on $C$. So, let us first choose an arbitrary point $A_1\neq\varphi(A_1)$.
Then, we can choose two different points $A_1',A_1''\notin\{ \varphi^{-1}(A_1),\varphi(A_1)\}$ such that $A_1'\neq \varphi(A_1')$
and $A_1''\neq \varphi(A_1'')$.  This defines the polygonal chains 
\begin{eqnarray*}
&&A_1,A_2=\varphi_{P_1}(A_1),A_3=\varphi_{P_2}(A_2),\ldots,A_{n-1}=\varphi_{P_{n-2}}(A_{n-2})=\varphi(A_1),\\
&&A'_1,A'_2=\varphi_{P_1}(A'_1),A'_3=\varphi_{P_2}(A'_2),\ldots,A'_{n-1}=\varphi_{P_{n-2}}(A'_{n-2})=\varphi(A'_1),\\
&&A''_1,A''_2=\varphi_{P_1}(A''_1),A''_3=\varphi_{P_2}(A''_2),\ldots,A''_{n-1}=\varphi_{P_{n-2}}(A''_{n-2})=\varphi(A''_1).
\end{eqnarray*}
Then the intersection $X$ of the lines $A_1A'_{n-1}$ with the line $A_1'A_{n-1}$
and the intersection $Y$ of the lines $A_1A''_{n-1}$ with the line $A_1''A_{n-1}$ are different and 
define a line $\ell$ (see Figure~\ref{fig-grund}). Choose a point $P_{n-1}$ on $\ell$ such that the line $A_{n-1}P_{n-1}$
intersects $C$ in a point $A_n$. The line $A_nA_1$ then intersects $\ell$ in a point $P_n$. Hence, 
$A_n=\varphi_{P_{n-1}}(A_{n-1})$ and $A_1=\varphi_{P_{n}}(A_{n})$.
\begin{figure}[ht!]
\begin{center}
\includegraphics{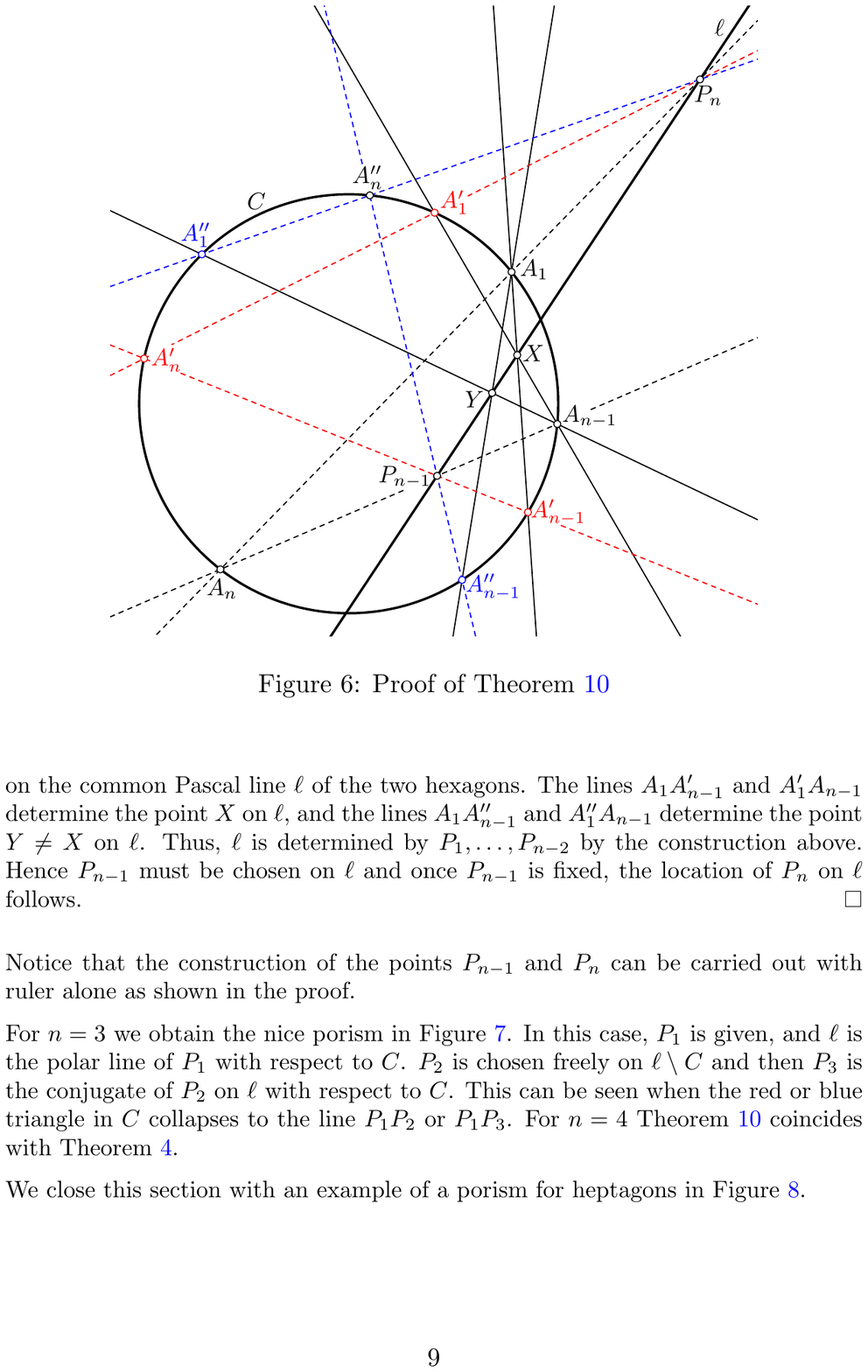}
\caption{Proof of Theorem~\ref{thm-main}}\label{fig-grund}
\end{center}
\end{figure}
Now we consider the intersection  $A'_n$ of the lines $A'_{n-1}P_{n-1}$ and $A'_1P_n$ and
the intersection $A''_n$ of the lines $A''_{n-1}P_{n-1}$ and $A''_1P_n$. By the Braikenridge-Maclaurin Theorem
applied to the hexagon $H_1=A_1 A'_{n-1} A'_n A'_1 A_{n-1} A_n$ it follows that $A'_n\in C$.
Similarly, by considering the hexagon $H_2=A_1 A''_{n-1} A''_n A''_1 A_{n-1} A_n$ it follows that $A''_n\in C$.
It follows that the map $\varphi_{P_n}\circ \ldots \circ \varphi_{P_1}$ has the fixed points
$A_1,A_1'$ and $A_1''$ on $C$ and is hence the identity. In particular, we see that
$$
\varphi=\varphi_{P_{n-2}}\circ \ldots \circ \varphi_{P_{1}} = \varphi_{P_{n-1}}\circ  \varphi_{P_{n}}.
$$
Hence, by Proposition~\ref{prop-ell}, the line $\ell$ on which $P_{n-1}$ and $P_n$ sit
is determined by the points $P_1,\ldots,P_{n-2}$, and clearly, $P_n$ is determined
as soon as $P_{n-1}$ is chosen on $\ell\setminus C$.

Conversely, if we assume that $\varphi_{P_{n}}\circ \ldots \circ \varphi_{P_{1}} =\operatorname{id}$,
there are three $n$-gons $A_1,\ldots, A_n$, $A'_1,\ldots, A'_n$ and $A''_1,\ldots, A''_n$ as in Figure~\ref{fig-grund}.
Then, by the Pascal Theorem applied to the hexagons $H_1$ and $H_2$ mentioned above, it follows
that $P_{n-1}$ and $P_n$ must lie on the common Pascal line $\ell$ of the two hexagons.
The lines $A_1A'_{n-1}$ and $A_1'A_{n-1}$ determine the point $X$ on $\ell$, and
the lines $A_1A''_{n-1}$ and $A_1''A_{n-1}$ determine the point $Y\neq X$ on $\ell$. Thus, $\ell$
is determined by $P_1,\ldots,P_{n-2}$ by the construction above. Hence $P_{n-1}$ must be chosen on $\ell$
and once $P_{n-1}$ is fixed, the location of $P_n$ on $\ell$ follows.
\end{proof}
Notice that the construction of the points $P_{n-1}$ and $P_n$ can be carried out with ruler alone as
shown in the proof.

For $n=3$ we obtain the nice porism in Figure~\ref{fig-3}. In this case,  $P_1$ is given, and
$\ell$ is the polar line of $P_1$ with respect to $C$.  $P_2$ is chosen freely on $\ell\setminus C$
and then $P_3$ is the conjugate of $P_2$ on $\ell$ with respect to $C$. This can be seen when
the red or blue triangle in $C$ collapses to the line $P_1P_2$ or $P_1P_3$.
\begin{figure}
\begin{center}
\includegraphics{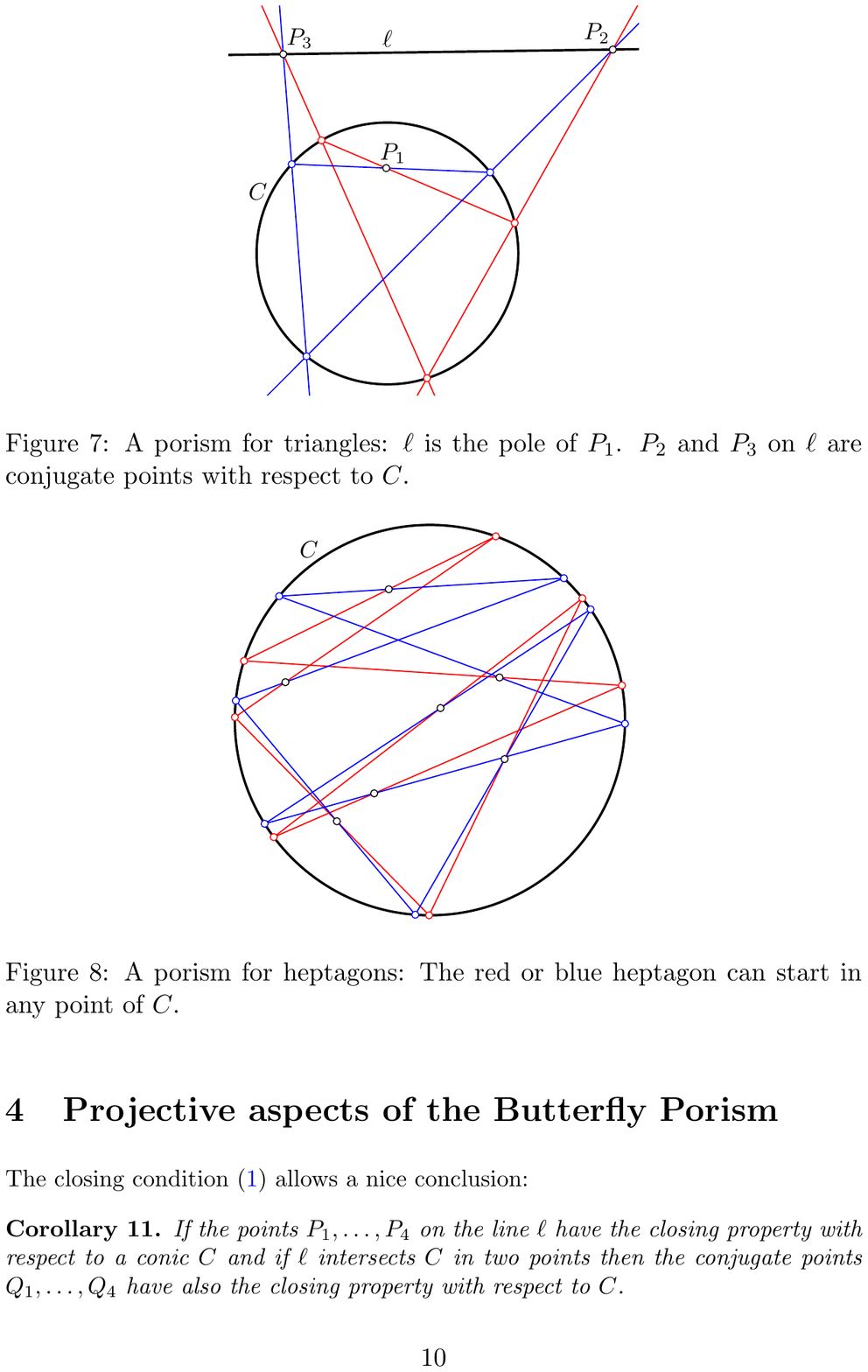}
\caption{A porism for triangles: $\ell$ is the pole of $P_1$. $P_2$ and $P_3$ on $\ell$ are conjugate points with respect to $C$.}\label{fig-3}
\end{center}
\end{figure}
For $n=4$ Theorem~\ref{thm-main} coincides with Theorem~\ref{thm-kocik}.

We close this section with an example of a porism for heptagons in Figure~\ref{fig-7}.
\begin{figure}[ht!]
\begin{center}\vspace*{-5mm}
\includegraphics{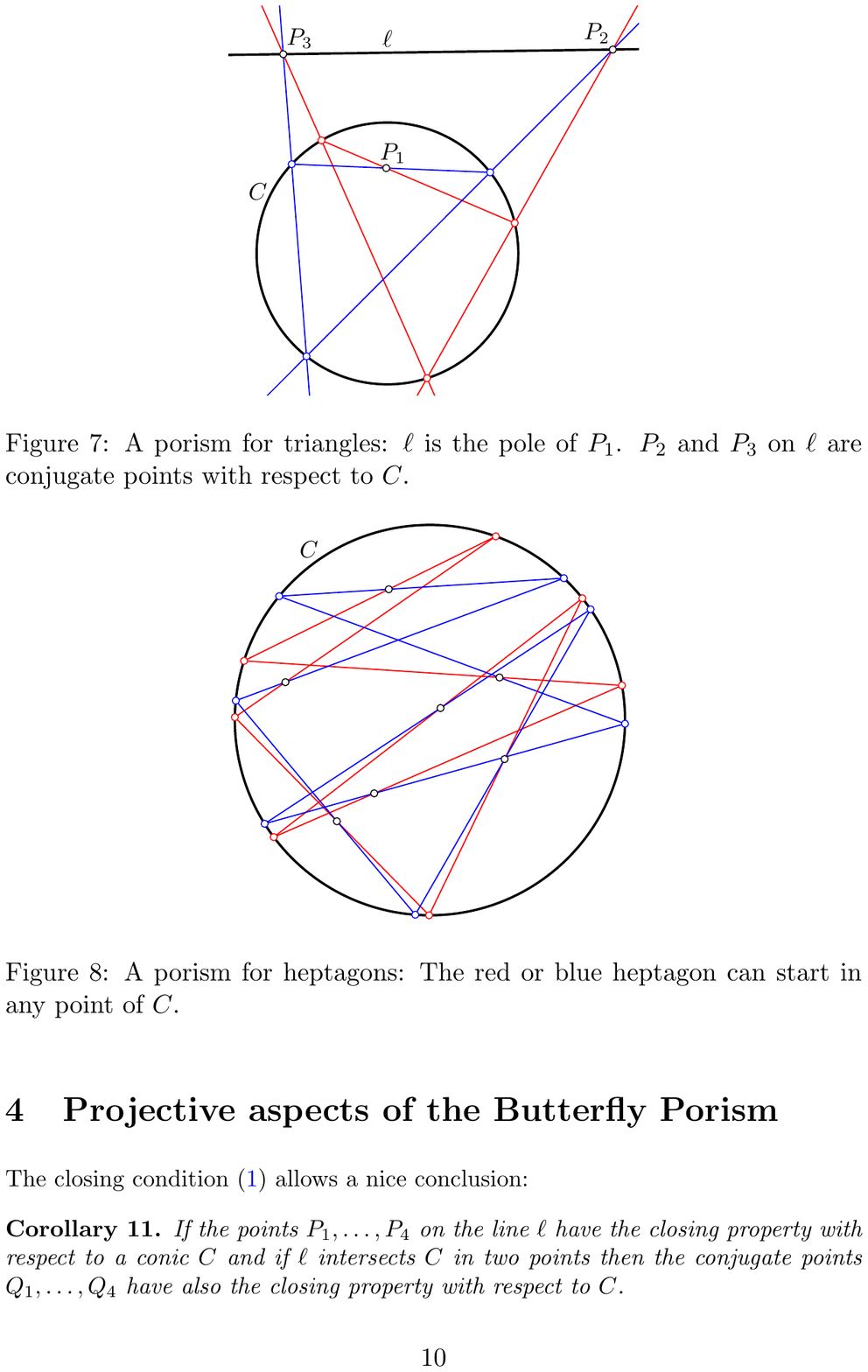}
\caption{A porism for heptagons: The red or blue heptagon can start in any point of $C$.}\label{fig-7}
\end{center}
\end{figure}

\section{Projective aspects of the Butterfly Porism}
The closing condition~(\ref{eq-DV}) allows a nice conclusion:
\begin{corollary}\label{cor-conjugate}
If the points $P_1,\ldots,P_4$ on the line $\ell$ have the closing property with respect
to a conic $C$ and if $\ell$ intersects $C$ in two points 
then the conjugate points $Q_1,\ldots,Q_4$ 
have also the closing property with respect to $C$.
\end{corollary}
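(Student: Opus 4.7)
The plan is to combine Izmestiev's cross-ratio criterion~(\ref{eq-DV}) with the standard relationship between polarity and harmonic conjugates on a secant. First I would observe that, because $\ell$ meets $C$ in two points $R, S$, the $C$-conjugate on $\ell$ of any point $P \in \ell \setminus C$ coincides with the harmonic conjugate of $P$ with respect to the pair $\{R, S\}$; this is the familiar description of the polarity of $C$ restricted to a secant line. In particular, $\operatorname{cr}(R, S; P_i, Q_i) = -1$ for $i = 1, \ldots, 4$, and each $Q_i$ automatically lies in $\ell \setminus C$ (otherwise the cross-ratio above would vanish).

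Next I would introduce the harmonic-conjugation involution $\sigma \colon \ell \to \ell$ with respect to $\{R, S\}$. It is a projective involution of $\ell$ satisfying $\sigma(R) = R$, $\sigma(S) = S$ and $\sigma(P_i) = Q_i$ for each $i$; in a projective coordinate on $\ell$ in which $R = 0$ and $S = \infty$ it is simply $x \mapsto -x$. Since $\sigma$ is projective it preserves cross-ratios, and together with the fact that it fixes both $R$ and $S$ this yields
$$
\operatorname{cr}(R, S; Q_1, Q_4) = \operatorname{cr}(R, S; P_1, P_4) \quad \text{and} \quad \operatorname{cr}(R, S; Q_2, Q_3) = \operatorname{cr}(R, S; P_2, P_3).
$$

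By hypothesis the $P_i$ satisfy~(\ref{eq-DV}), so the right-hand sides of these two identities coincide; hence the left-hand sides coincide as well, and~(\ref{eq-DV}) applied in the reverse direction shows that $Q_1, \ldots, Q_4$ have the closing property with respect to $C$. There is no serious obstacle here; the only step that warrants an explicit word is the identification of $C$-conjugacy on $\ell$ with harmonic conjugacy with respect to $\ell \cap C$, which is a classical consequence of the polarity of a nondegenerate conic.
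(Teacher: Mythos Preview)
Your argument is correct and follows the same strategy as the paper: both proofs invoke Izmestiev's criterion~(\ref{eq-DV}) and reduce the claim to the identity $\operatorname{cr}(R,S;Q_i,Q_j)=\operatorname{cr}(R,S;P_i,P_j)$. The only tactical difference is that the paper justifies this identity by passing to the dual---writing $\operatorname{cr}(R,S;Q_1,Q_4)=\operatorname{cr}(r,s;p_1,p_4)=\operatorname{cr}(R,S;P_1,P_4)$ via the polarity, using that the polar lines $r,s,p_i$ are concurrent at the pole of $\ell$---whereas you stay on $\ell$ and use that harmonic conjugation with respect to $\{R,S\}$ is a projective involution fixing $R$ and $S$; these are two phrasings of the same underlying fact.
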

\begin{proof}
The cross ratio of four points equals  the cross ratio of the corresponding polar lines, i.e.,
we have
\begin{align*}
&cr(R,S;Q_1,Q_4)=cr(r,s;p_1,p_4)=cr(R,S;P_1,P_4)=\\
&cr(R,S;P_2,P_3)=cr(r,s;p_2,p_3)=cr(R,S;Q_2,Q_3).
\end{align*}
See Figure~\ref{fig-conjugate}. 

\begin{figure}[ht!]
\begin{center}
\includegraphics{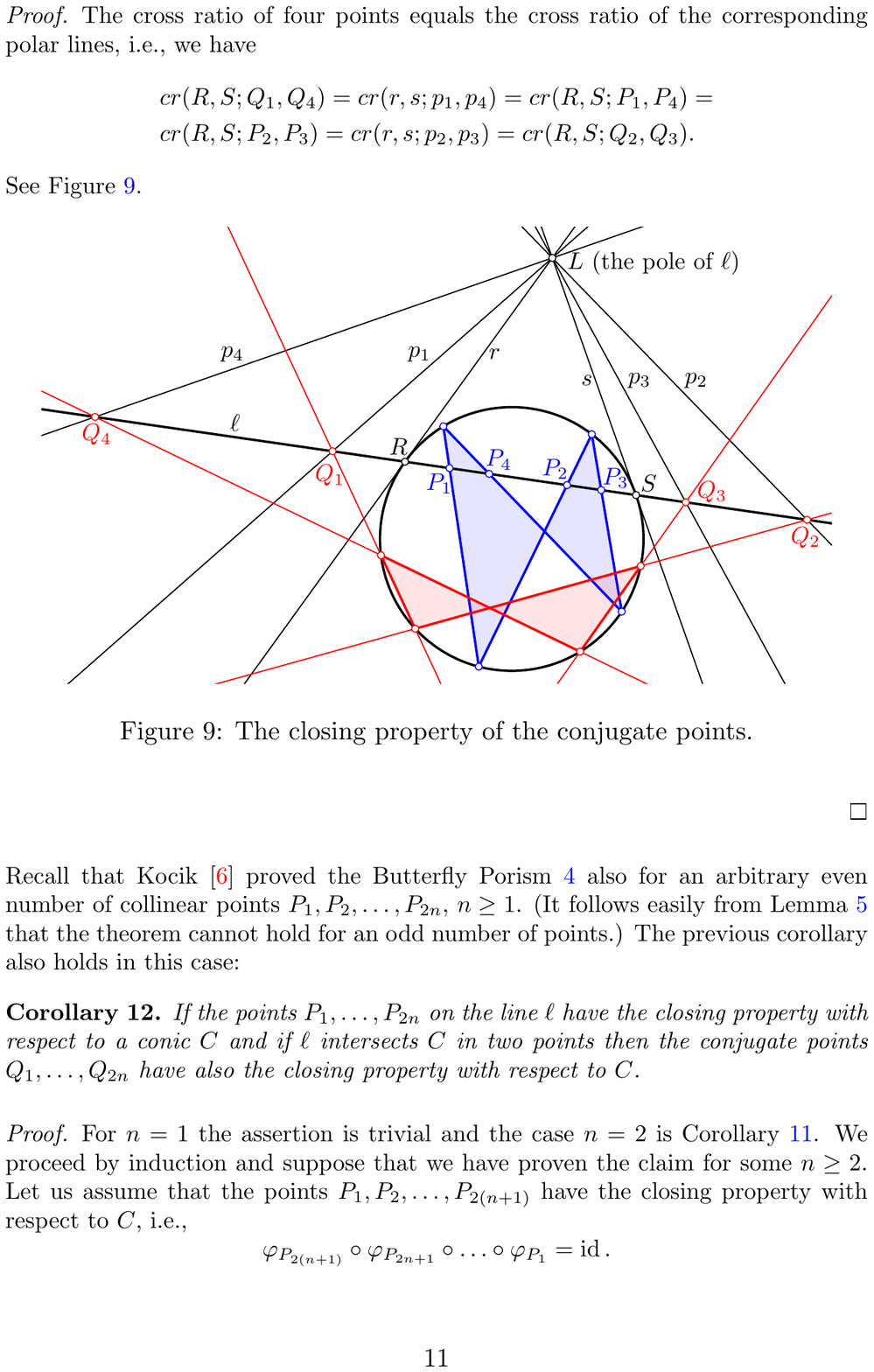}
\caption{The closing property of the conjugate points.}\label{fig-conjugate}
\end{center}
\end{figure}
\end{proof}

Recall that Kocik~\cite{Kocik} proved the Butterfly Porism~\ref{thm-kocik} also for an 
arbitrary even number of collinear points $P_1,P_2,\ldots,P_{2n}$, $n\ge 1$. (It follows
easily from Lemma~\ref{lem-kocik} that the theorem cannot hold for an odd number of points.)
The previous corollary also holds in this case: 
\begin{corollary}
If the points $P_1,\ldots,P_{2n}$ on the line $\ell$ have the closing property with respect
to a conic $C$ and if $\ell$ intersects $C$ in two points 
then the conjugate points $Q_1,\ldots,Q_{2n}$ 
have also the closing property with respect to $C$.
\end{corollary}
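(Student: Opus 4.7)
The plan is to generalize Corollary~\ref{cor-conjugate} by grouping the $2n$ reversions into adjacent pairs. I would write
$$
\varphi_{P_{2n}}\circ\cdots\circ\varphi_{P_1}
=\psi_n\circ\cdots\circ\psi_1,\qquad
\psi_i:=\varphi_{P_{2i}}\circ\varphi_{P_{2i-1}},
$$
and invoke Proposition~\ref{prop-ell}(a): each $\psi_i$ has $\ell$ as a fixed line and, since $\ell$ meets $C$ in $R$ and $S$, also fixes both of these points. Hence $\psi_i|_C$ is a projective self-map of $C$ fixing $R$ and $S$, so it is determined by a single multiplier $\lambda_i$.

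The key step is a short computational lemma: $\lambda_i$ depends only on the cross-ratio $\operatorname{cr}(R,S;P_{2i-1},P_{2i})$. In a rational parametrization of $C$ sending $R\mapsto 0$ and $S\mapsto\infty$, each reversion $\varphi_P|_C$ takes the form $u\mapsto c_P/u$, where $c_P$ is a linear coordinate of $P$ on $\ell$; composing two such involutions yields the multiplication $u\mapsto(c_{P_{2i}}/c_{P_{2i-1}})\,u$, and the ratio $c_{P_{2i}}/c_{P_{2i-1}}$ is (up to the conventional inversion) precisely $\operatorname{cr}(R,S;P_{2i-1},P_{2i})$.

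With this lemma, the closing condition is equivalent to $\prod_{i=1}^{n}\lambda_i=1$. Now the conjugate of $P\in\ell$ with respect to $C$ is the intersection of the polar of $P$ with $\ell$, which is the harmonic conjugate of $P$ with respect to $R$ and $S$. Therefore the map $P\mapsto Q$ on $\ell$ is the involution with fixed points $R,S$, and in particular preserves every cross-ratio of the form $\operatorname{cr}(R,S;\cdot,\cdot)$; in the parametrization above it sends $c_P\mapsto -c_P$, so each ratio $c_{P_{2i}}/c_{P_{2i-1}}$ is unchanged. Consequently the $Q$-multipliers coincide with the $P$-multipliers, the identity $\prod\lambda_i=1$ continues to hold, and because a projective map of $\mathbb{R}\!\operatorname{P}^2$ that keeps $C$ fixed is determined by its action on any three points of $C$, the identity on $C$ lifts to $\varphi_{Q_{2n}}\circ\cdots\circ\varphi_{Q_1}=\operatorname{id}$.

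The main obstacle is the computational identification of $\lambda_i$ with a cross-ratio on $\ell$, which requires picking a normal form of $C$ and computing the action of $\varphi_P$ in coordinates. Once that lemma is in place, the argument is essentially a pairwise application of Corollary~\ref{cor-conjugate}, so no new geometric input is needed for the step from $n=2$ to general $n$.
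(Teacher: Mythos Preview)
Your argument is correct, but it follows a different route from the paper's. The paper proceeds by induction on $n$: using Lemma~\ref{lem-kocik} it collapses the last three reversions $\varphi_{P_{2(n+1)}}\circ\varphi_{P_{2n+1}}\circ\varphi_{P_{2n}}$ into a single $\varphi_P$, invokes the base case Corollary~\ref{cor-conjugate} to get the corresponding identity for the conjugates, and then applies the induction hypothesis to the remaining $2n$ points. Your proof, by contrast, is a direct coordinate computation: you parametrize $C$ so that $R\mapsto 0$, $S\mapsto\infty$, observe that every $\varphi_P$ with $P\in\ell$ swaps $R$ and $S$ and hence acts on $C$ as $u\mapsto c_P/u$, and conclude that the full composition is multiplication by $\prod_i c_{P_{2i}}/c_{P_{2i-1}}$. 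Since harmonic conjugation with respect to $R,S$ sends $c_P$ to $-c_P$, each ratio is preserved and the closing condition carries over. This buys you an explicit multiplicative closing criterion for any even number of collinear points and actually reproves the $n=2$ base case along the way, so you do not need to lean on Corollary~\ref{cor-conjugate} at all; the paper's inductive reduction is more synthetic and stays closer to the structural lemmas already established.
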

\begin{proof}
For $n=1$ the assertion is trivial and the case $n=2$ is Corollary~\ref{cor-conjugate}. We proceed
by induction and suppose that we have proven the claim for some $n\ge 2$.
Let us assume that the points $P_{1},P_2,\ldots,P_{2(n+1)}$ have the closing property
with respect to $C$, i.e.,
$$\varphi_{P_{2(n+1)}}\circ \varphi_{P_{2n+1}}\circ\ldots\circ\varphi_{P_{1}}=\operatorname{id}.$$
By Lemma~\ref{lem-kocik} we have that 
$$
\varphi_{P_{2(n+1)}}\circ\varphi_{P_{2n+1}}\circ \varphi_{P_{2n}}=\varphi_{P}
$$
for a point $P$ on the line $\ell$, and by Corollary~\ref{cor-conjugate}
\begin{equation}\label{eq-i}
\varphi_{Q_{2(n+1)}}\circ\varphi_{Q_{2n+1}}\circ \varphi_{Q_{2n}}=\varphi_{Q}
\end{equation}
for the conjugate point $Q$ of $P$. Thus, we have
$$
\varphi_P\circ\varphi_{P_{2n-1}}\circ\ldots\circ\varphi_{P_1}=\operatorname{id}
$$
and by the induction hypothesis
\begin{equation}\label{eq-ii}
\varphi_Q\circ\varphi_{Q_{2n-1}}\circ\ldots\circ\varphi_{Q_1}=\operatorname{id}.
\end{equation}
The claim follows when we replace $\varphi_Q$ in~(\ref{eq-ii}) by~(\ref{eq-i}).
\end{proof}

If the line $\ell$ is tangent to $C$, then the conjugate points $Q_i$ coincide with the point of contact.
However, we can extend the above result to the case when $\ell$ does not meet $C$:
\begin{theorem}
If the points $P_1,\ldots,P_{2n}$ on the line $\ell$ have the closing property with respect
to a conic $C$ and if $\ell$ does not meet $C$,  
then the conjugate points $Q_1,\ldots,Q_{2n}$ 
have also the closing property with respect to $C$.
\end{theorem}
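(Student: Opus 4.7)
My plan is to produce a single real projective involution $\sigma$ of $\mathbb{R}\!\operatorname{P}^2$ that preserves $C$ and sends each $P_i$ to its conjugate $Q_i$; once such $\sigma$ is constructed, conjugation by $\sigma$ converts the closing identity for the $P_i$ directly into the closing identity for the $Q_i$, without induction or case analysis.

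To build $\sigma$, I would complexify: in $\mathbb{C}\!\operatorname{P}^2$ the line $\ell$ meets $C$ in two complex conjugate points $R,\bar R$. The stabilizer of $C$ in $\operatorname{PGL}(3,\mathbb{R})$ maps isomorphically onto the real projective group $\operatorname{PGL}(2,\mathbb{R})$ of $C$ via its action on $C$, and $\operatorname{PGL}(2,\mathbb{R})$ contains a unique elliptic involution whose (complex conjugate) fixed points on $C$ are $R$ and $\bar R$. Pulling it back produces a real projective involution $\sigma$ of $\mathbb{R}\!\operatorname{P}^2$ preserving $C$ and fixing $R,\bar R$. Consequently $\sigma$ preserves $\ell=R\bar R$, and $\sigma|_\ell$ is the unique involution of $\ell$ with fixed points $R,\bar R$, i.e.\ the involution swapping each point of $\ell$ with its conjugate with respect to $C$. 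In particular, $\sigma(P_i)=Q_i$ for every $i$.

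Since $\sigma$ preserves $C$ and hence polarity with respect to $C$, the conjugate $\sigma\circ\varphi_P\circ\sigma^{-1}$ is a projective involution whose fixed point is $\sigma(P)$ and whose fixed-point line is the polar of $\sigma(P)$; by the characterization of the reversion map recalled before Lemma~\ref{lem-kocik}, this conjugate equals $\varphi_{\sigma(P)}$. Applying this termwise gives
$$\varphi_{Q_{2n}}\circ\cdots\circ\varphi_{Q_1}\;=\;\sigma\circ(\varphi_{P_{2n}}\circ\cdots\circ\varphi_{P_1})\circ\sigma^{-1}\;=\;\sigma\circ\operatorname{id}\circ\sigma^{-1}\;=\;\operatorname{id}.$$
The step requiring care is the construction of $\sigma$ --- specifically, the realisation of an elliptic involution of $C$ (one with complex conjugate fixed points) by a real projective map of $\mathbb{R}\!\operatorname{P}^2$. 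This is standard from the classification of projective involutions of $\operatorname{P}^1$, and in concrete coordinates $\sigma$ can be written down explicitly as the unique real projective involution that fixes the pole $\pi$ of $\ell$, preserves $\ell$, and acts on $\ell$ without real fixed points.
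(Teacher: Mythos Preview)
Your overall strategy---conjugate the closing identity by a projective automorphism of the plane that preserves $C$ and realises the conjugation involution on $\ell$---is exactly the route the paper takes. The paper normalises to $C:\langle X,X\rangle=0$ and $\ell:x_3=0$, and then uses the quarter-turn $\psi:(x_1,x_2,x_3)\mapsto(-x_2,x_1,x_3)$; this preserves $C$ and sends each $P_i\in\ell$ to its conjugate $Q_i$, so closed polygons are carried to closed polygons. Note that $\psi$ has order~$4$, not~$2$.

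The genuine gap is your construction of $\sigma$: the projective involution you describe does not do what you claim, and in fact no such involution exists. In the coordinates above, the unique elliptic involution of $C$ with fixed points $R,\bar R=(1,\pm i,0)$ is the antipodal map $(x_1,x_2,x_3)\mapsto(-x_1,-x_2,x_3)$. Its restriction to $\ell$ sends $(p_1,p_2,0)$ to $(-p_1,-p_2,0)\sim(p_1,p_2,0)$, i.e.\ it is the \emph{identity} on $\ell$, not the conjugation involution; so $\sigma(P_i)=P_i$, not $Q_i$. Your alternative description in the final sentence fails for the same reason: any projective involution of $\mathbb{R}\!\operatorname{P}^2$ fixing the pole $\pi=(0,0,1)$ and preserving $\ell$ is represented by a block matrix $\bigl(\begin{smallmatrix}A&0\\0&d\end{smallmatrix}\bigr)$ with $(A/d)^2=I$, hence $A$ has real eigenvalues $\pm d$, and $\sigma|_\ell$ is either the identity or a hyperbolic involution with two real fixed points---never the elliptic involution you want. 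The remedy is simply to drop the demand that $\sigma$ be an involution: your key identity $\sigma\circ\varphi_P\circ\sigma^{-1}=\varphi_{\sigma(P)}$ holds for \emph{any} projective $\sigma$ preserving $C$, so the paper's order-$4$ map $\psi$ plugs straight into the rest of your argument.
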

\begin{proof}
By applying a projective map we may assume that $C$ is the conic given by $\langle X,X\rangle=0$
and $\ell$ the line given by $\langle X,L\rangle=0$ with $L=(0,0,1)$. 
The projective map $\psi:(x_1,x_2,x_3)\mapsto (-x_2,x_1,x_3)$ maps $C$ to $C$, and
points on $\ell$ to the conjugate points with respect to $C$. Thus, every closed
polygon on $C$ with sides running successively through the points $P_1,\ldots,P_{2n}$
is mapped by $\psi$ to a closed polygon on $C$ with sides running successively through the conjugate points $Q_1,\ldots,Q_{2n}$.
\end{proof}
Izmestiev noted, that since the cross ratio is invariant under projective transformations,
the closing condition~(\ref{eq-DV}) holds for an arbitrary non-degenerate conic $C$.
I.e., if $P_1,\ldots,P_4$ on a line $\ell$ have the closing property with respect to a conic $C$
they have also the closing property with respect to any other conic $D$ which intersects
$\ell$ in the same points $R$ and $S$ as $C$. The reasoning is as follows:
There exists a projective map $\psi$ which maps $D$ to $C$ and which has the fixed
points $R$ and $S$. Let $Q_i=\psi(P_i)$ for $i=1,\ldots,4$. Then the points $Q_i$ have the
closing property with respect to $C$ iff $cr(R,S;Q_1,Q_4)=cr(R,S;Q_2,Q_3)$
which is equivalent to the condition $cr(R,S;P_1,P_4)=cr(R,S;P_2,P_3)$.
If it is satisfied, then any closed quadrilateral  on $C$ with sides running successively through
the points $Q_1,\ldots,Q_4$ is mapped by $\psi^{-1}$ to a closed quadrilateral
on $D$ with sides running successively through the points $P_1,\ldots,P_4$.
Notice however, that in general there is no projective map which maps a 
closed polygon on $C$ with sides running through the points $P_1,\ldots,P_4$
to a closed polygon on $D$ with sides running through the points $P_1,\ldots,P_4$.
This is due to the fact that the points  $P_1,\ldots,P_4$ may be inner points of
$C$ but exterior points of $D$.

Even more generally, one can replace $R$ by a point $R'$ on $\ell$ and then
determine the unique point $S'$ such that  $cr(R',S';P_1,P_4)=cr(R',S';P_2,P_3)$.
Then the $P_1,\ldots,P_4$ have the closing property with respect to any conic
$D$ running through $R'$ and $S'$.

We can generalise the observation above from four to an arbitrary even number of points:
\begin{proposition}
If the points $P_1,\ldots,P_{2n}$ on the line $\ell$ have the closing property with respect
to a conic $C$ and if $\ell$  intersects $C$  in two points $R$ and $S$, then
$P_1,\ldots,P_{2n}$ have the closing property with respect to any other
conic $D$ through the points $R$ and $S$.
\end{proposition}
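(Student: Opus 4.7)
The plan is to induct on $n$, with the crucial base case $n=2$ being the statement for four collinear points already discussed in the paragraph preceding the present proposition: the characterisation by $\operatorname{cr}(R,S;P_1,P_4)=\operatorname{cr}(R,S;P_2,P_3)$ depends only on $P_1,\ldots,P_4$ and $R,S$ on $\ell$, not on the specific conic through $R,S$. The case $n=1$ is trivial, since $\varphi_{P_2}\circ\varphi_{P_1}=\operatorname{id}$ forces $P_1=P_2$ regardless of the conic.

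For the inductive step, suppose the claim holds for $2n$ points and take $P_1,\ldots,P_{2n+2}$ on $\ell$ closing with respect to $C$. To distinguish the reversions with respect to the two conics I write $\varphi^C_Q$ and $\varphi^D_Q$. The key idea is to compress the last three reversions into one via Lemma~\ref{lem-kocik} applied to $C$ and the triple $(P_{2n},P_{2n+1},P_{2n+2})$: it produces a unique point $P$ on $\ell$, not on $C$, satisfying
$$\varphi^C_P\circ\varphi^C_{P_{2n+2}}\circ\varphi^C_{P_{2n+1}}\circ\varphi^C_{P_{2n}}=\operatorname{id}.$$
Since $\ell\cap C=\ell\cap D=\{R,S\}$ and $P\notin C$, we also have $P\notin D$, so all reversions with respect to $D$ that will appear are well defined.

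The argument then splits into two reductions. Using $\varphi^C_{P_{2n+2}}\circ\varphi^C_{P_{2n+1}}\circ\varphi^C_{P_{2n}}=\varphi^C_P$ to shorten the closing relation of $P_1,\ldots,P_{2n+2}$ shows that the $2n$ points $P_1,\ldots,P_{2n-1},P$ close with respect to $C$, and the inductive hypothesis transfers this to $D$. Separately, the four points $P_{2n},P_{2n+1},P_{2n+2},P$ close with respect to $C$ by the defining property of $P$, so the base case $n=2$ gives the analogous fusion $\varphi^D_{P_{2n+2}}\circ\varphi^D_{P_{2n+1}}\circ\varphi^D_{P_{2n}}=\varphi^D_P$. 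Re-inserting this into the $2n$-term closing identity for $D$ yields the desired closing identity for $P_1,\ldots,P_{2n+2}$ with respect to $D$. The only subtle point is conceptual rather than computational: the fusion point $P$ is manufactured from $C$ via Lemma~\ref{lem-kocik}, yet it must serve as the correct auxiliary point for the corresponding fusion with respect to $D$ — this is precisely what the base case $n=2$, combined with the uniqueness clause of Lemma~\ref{lem-kocik}, delivers, so no further calculation is required.
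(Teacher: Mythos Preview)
Your proof is correct and follows essentially the same inductive strategy as the paper: compress the last three reversions into a single reversion $\varphi_P$ via Lemma~\ref{lem-kocik}, use the base case $n=2$ to see that the same point $P$ effects this compression for $D$, apply the inductive hypothesis to the shortened list $P_1,\ldots,P_{2n-1},P$, and then unpack. Your added remark that $P\notin D$ (because $\ell\cap C=\ell\cap D=\{R,S\}$) is a detail the paper leaves implicit.
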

\begin{proof}
For $n=1$ and $n=2$ there is nothing more to prove. So we can proceed by
induction and assume that we have proven the claim for some $n\ge 2$.
Suppose that the points $P_{2(n+1)},P_{2n+1},\ldots,P_1$ on the line $\ell$ have the closing
property with respect to a conic $C$. I.e., we have
$$
\varphi_{P_{2(n+1)}}^C\circ \varphi_{P_{2n+1}}^C\circ\ldots\circ \varphi_{P_1}^C=\operatorname{id}
$$
where $\varphi_{P_i}^C$ means the reversion map with respect to $C$.
Then, by the result for $n=2$ we have
$$
\varphi_{P_{2(n+1)}}^C\circ \varphi_{P_{2n+1}}^C\circ \varphi_{P_{2n}}^C=\varphi_{P}^C
$$
for some $P$ on $\ell$ and also
\begin{equation}\label{eq-I}
\varphi_{P_{2(n+1)}}^D\circ \varphi_{P_{2n+1}}^D\circ \varphi_{P_{2n}}^D=\varphi_{P}^D
\end{equation}
for another conic $D$ through the points $R$ and $S$. So, we have
$$
\varphi_{P}^C\circ \varphi_{P_{2n-1}}^C\circ\ldots\circ \varphi_{P_1}^C=\operatorname{id}
$$
and by the induction hypothesis
\begin{equation}\label{eq-II}
\varphi_{P}^D\circ \varphi_{P_{2n-1}}^D\circ\ldots\circ \varphi_{P_1}^D=\operatorname{id}.
\end{equation}
If we replace $\varphi_{P}^D$ in~(\ref{eq-II}) by~(\ref{eq-I}) the claim follows.
\end{proof}
In the case when $\ell$ is tangent to $C$ in the point $R$, Izmestiev gave the closing criterion
\begin{equation}\label{eq-iz}
\frac1{R-P_1}- \frac1{R-P_4}=\frac1{R-P_2}- \frac1{R-P_3}
\end{equation}
for which it is not directly clear that it is projectively invariant. See Figure~\ref{fig-tan}.
\begin{figure}
\begin{center}
\includegraphics{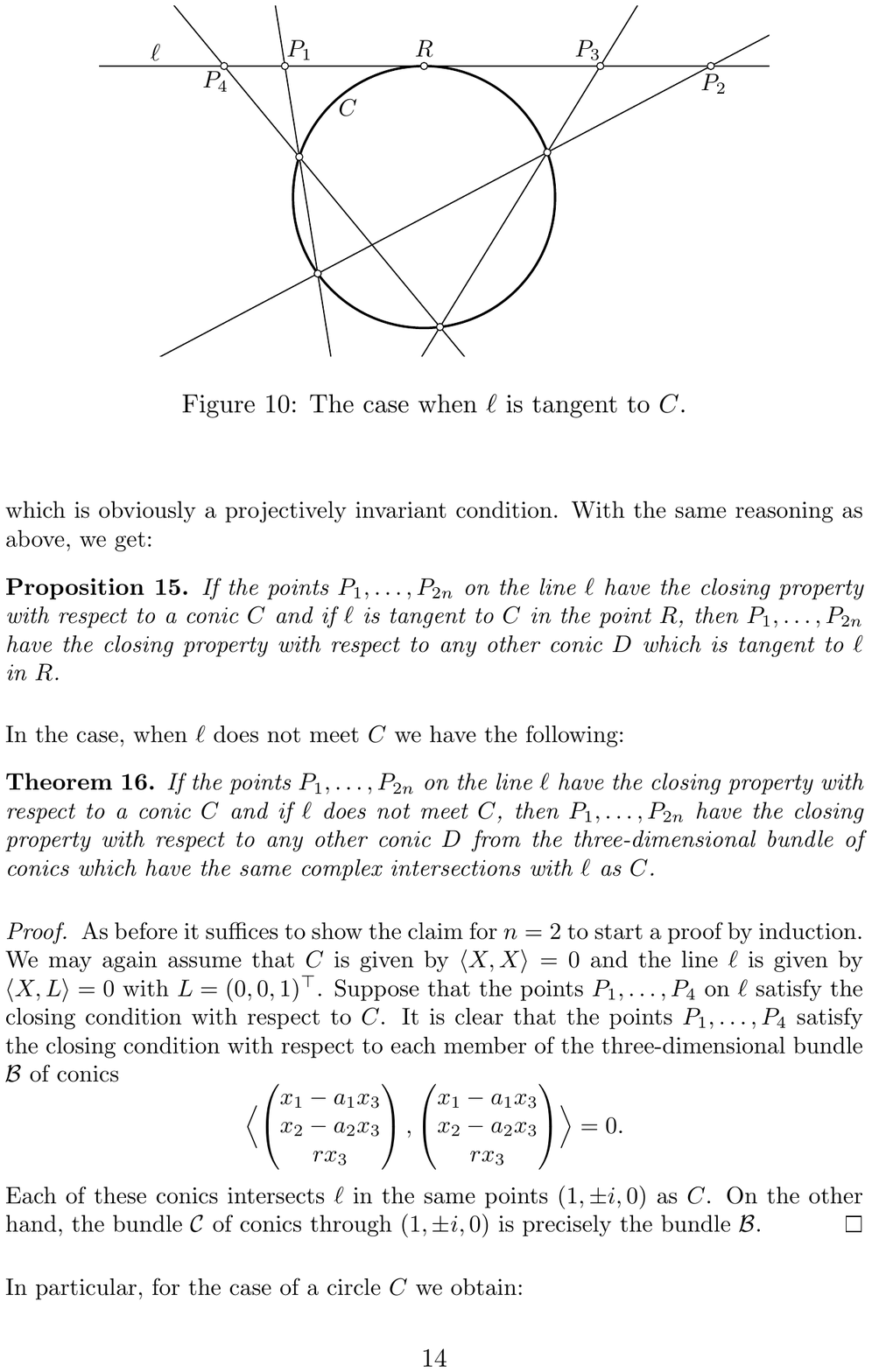}
\caption{The case when $\ell$ is tangent to $C$.}\label{fig-tan}
\end{center}
\end{figure}
The point is, that~(\ref{eq-iz}) can easily be reformulated as
$$
cr(R,P_3;P_1,P_4)=cr(R,P_1;P_3,P_2)
$$
which is obviously a projectively invariant condition.
With the same reasoning as above, we get:
\begin{proposition}
If the points $P_1,\ldots,P_{2n}$ on the line $\ell$ have the closing property with respect
to a conic $C$ and if $\ell$  is tangent to $C$  in the point $R$, then
$P_1,\ldots,P_{2n}$ have the closing property with respect to any other
conic $D$ which is tangent to $\ell$ in~$R$.
\end{proposition}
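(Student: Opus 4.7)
The plan is to mirror the induction scheme of the preceding proposition, with the base case $n=2$ now handled by the cross-ratio reformulation of Izmestiev's tangent closing criterion. The case $n=1$ is vacuous (two points close iff they are equal, which is a property of points, not of the conic).

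For the base case $n=2$, use the remark just preceding the statement: the closing criterion~(\ref{eq-iz}) for four points with respect to $C$ is equivalent to the cross-ratio identity $\operatorname{cr}(R,P_3;P_1,P_4)=\operatorname{cr}(R,P_1;P_3,P_2)$. This identity refers only to the four points $P_1,\ldots,P_4$ on $\ell$ and to the tangent point $R$; the conic itself does not appear. Applying the same result of Izmestiev to $D$, which is tangent to $\ell$ at the same point $R$, gives that the identical cross-ratio equation is the closing condition for $D$. Hence if $P_1,\ldots,P_4$ close with respect to $C$, they also close with respect to $D$.

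For the inductive step, assume the claim for some $n\ge 2$ and let $P_1,\ldots,P_{2(n+1)}$ on $\ell$ satisfy
\begin{equation*}
\varphi^{C}_{P_{2(n+1)}}\circ\varphi^{C}_{P_{2n+1}}\circ\cdots\circ\varphi^{C}_{P_1}=\operatorname{id}.
\end{equation*}
By Lemma~\ref{lem-kocik} there is a point $P\in\ell\setminus C$ with
\begin{equation*}
\varphi^{C}_{P_{2(n+1)}}\circ\varphi^{C}_{P_{2n+1}}\circ\varphi^{C}_{P_{2n}}=\varphi^{C}_{P}.
\end{equation*}
Since reversions are involutions, the four points $P_{2n},P_{2n+1},P_{2(n+1)},P$ close with respect to $C$, so by the base case they also close with respect to $D$, yielding
\begin{equation*}
\varphi^{D}_{P_{2(n+1)}}\circ\varphi^{D}_{P_{2n+1}}\circ\varphi^{D}_{P_{2n}}=\varphi^{D}_{P}. \qquad (\star)
\end{equation*}
Substituting the first identity back into the original closing relation reduces it to a closing relation for the $2n$ points $P_1,\ldots,P_{2n-1},P$ with respect to $C$. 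By the induction hypothesis, these $2n$ points close with respect to $D$ as well. Replacing $\varphi^{D}_{P}$ in that $D$-identity using $(\star)$ produces the sought closing relation for $P_1,\ldots,P_{2(n+1)}$ with respect to $D$.

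The only real obstacle is the base case, and the entire proof hinges on the observation that once Izmestiev's tangent criterion has been rewritten in cross-ratio form, the conic has disappeared from the equation. Beyond this, the argument is a mechanical transcription of the secant-case proof. One trivial sanity check: the auxiliary point $P$ must lie in $\ell\setminus D$ as well as in $\ell\setminus C$; but since $\ell\cap C=\ell\cap D=\{R\}$, the condition $P\in\ell\setminus C$ amounts to $P\neq R$, which automatically gives $P\in\ell\setminus D$.
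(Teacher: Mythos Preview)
Your proof is correct and follows exactly the route the paper intends: the base case $n=2$ is settled by observing that the cross-ratio form $\operatorname{cr}(R,P_3;P_1,P_4)=\operatorname{cr}(R,P_1;P_3,P_2)$ of Izmestiev's tangent criterion involves only $R$ and the $P_i$ (not the conic), and the induction is a verbatim replay of the secant-case argument. Your remark that $P\in\ell\setminus D$ is automatic because $\ell\cap C=\ell\cap D=\{R\}$ is a detail the paper leaves implicit.
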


In the case, when $\ell$ does not meet $C$ we have the following:
\begin{theorem}
If the points $P_1,\ldots,P_{2n}$ on the line $\ell$ have the closing property with respect
to a conic $C$ and if $\ell$  does not meet $C$, then
$P_1,\ldots,P_{2n}$ have the closing property with respect to any other
conic $D$ from the three-dimensional bundle of conics which have the same complex intersections with $\ell$ as $C$.
\end{theorem}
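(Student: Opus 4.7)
My plan is to reduce to a Euclidean normal form in which the closing condition becomes a pure statement about angles. Since $\ell$ misses $C$ over the reals, I will apply a real projective transformation $\psi$ that sends $\ell$ to the line at infinity and (after a further affine normalization) $C$ to the unit circle. Under $\psi$ the conic $D$ becomes a real non-degenerate conic meeting the line at infinity in the same circular points $I=(1,i,0)$ and $\bar I=(1,-i,0)$ as the unit circle; a brief algebraic check shows that any such conic has the form $a(x_1^2+x_2^2)+cx_3^2+2ex_1x_3+2fx_2x_3=0$ with $a\neq 0$, and is therefore again a Euclidean circle. So it suffices to establish the theorem when $\ell$ is the line at infinity and both $C$ and $D$ are Euclidean circles.

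In this normal form each $P_i$ encodes a real direction $\alpha_i$ in the affine plane. For any circle $\Gamma$ with center $O_\Gamma$, the reversion $\varphi_{P_i}^\Gamma$ sends a boundary point $X\in\Gamma$ to the second intersection with $\Gamma$ of the line through $X$ of direction $\alpha_i$, which is nothing but the reflection of $\Gamma$ in the diameter through $O_\Gamma$ perpendicular to $\alpha_i$. Since a projective map of $\mathbb R\!\operatorname{P}^2$ is determined by its action on sufficiently many points of $\Gamma$, the projective extension of $\varphi_{P_i}^\Gamma$ must coincide with ordinary Euclidean reflection in this diameter. The composition $\varphi_{P_{2n}}^\Gamma\circ\ldots\circ\varphi_{P_1}^\Gamma$ is therefore a product of $2n$ Euclidean reflections in lines concurrent at $O_\Gamma$, and thus equals rotation about $O_\Gamma$ by the angle
$$\Delta\;=\;2\sum_{i=1}^n(\alpha_{2i}-\alpha_{2i-1})\pmod{2\pi}.$$
Crucially, $\Delta$ depends only on the directions $\alpha_i$ and not on which circle $\Gamma$ was chosen, and the composition is the identity of $\mathbb R\!\operatorname{P}^2$ iff $\Delta\equiv 0\pmod{2\pi}$.

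The conclusion will then follow by reading this last observation once with $\Gamma=C$ and once with $\Gamma=D$ and pulling back along $\psi^{-1}$, which conjugates reversions to reversions and so preserves the closing property. The step that I expect to require the most care is the geometric identification in the second paragraph, namely that the projective extension of $\varphi_{P_i}^\Gamma$ coincides with ordinary Euclidean reflection in the diameter perpendicular to $\alpha_i$; once this hinge is in place the rest is a direct appeal to the elementary calculus of reflections in concurrent lines, and I do not anticipate any further obstacle.
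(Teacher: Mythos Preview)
Your argument is correct and is genuinely different from the paper's. Both proofs begin with the same normalisation, sending $\ell$ to the line at infinity and $C$ to the unit circle, and both observe that the bundle in question becomes precisely the family of Euclidean circles. From there the paper proceeds by induction on $n$: it asserts that the base case $n=2$ is ``clear'' after normalisation (implicitly relying on Izmestiev's closing criterion for the non-intersecting case), and then passes from $2n$ to $2(n+1)$ by collapsing three consecutive reversions to one via Lemma~\ref{lem-kocik}, exactly as in the two preceding propositions.

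You instead bypass the induction entirely. After normalisation you identify each $\varphi_{P_i}^{\Gamma}$ with the Euclidean reflection in the diameter of $\Gamma$ perpendicular to the direction $\alpha_i$, and then compute the full composition as a rotation about the centre of $\Gamma$ through the angle $\Delta=2\sum_{i=1}^{n}(\alpha_{2i}-\alpha_{2i-1})$, which visibly does not depend on $\Gamma$. This handles all $n$ at once and, as a bonus, makes the paper's ``it is clear'' step for $n=2$ fully explicit without appeal to an external criterion. The identification of $\varphi_{P_i}^{\Gamma}$ with the reflection---the hinge you flagged---is indeed sound: both are projective maps of $\mathbb R\!\operatorname{P}^2$ agreeing on the whole conic $\Gamma$, which contains four points in general position, so they coincide. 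One small remark worth adding for completeness: each $\alpha_i$ is only well defined modulo $\pi$, but since it enters $\Delta$ with coefficient $\pm 2$, the quantity $\Delta$ is well defined modulo $2\pi$, so the rotation is unambiguous.

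What your approach buys is a self-contained, induction-free proof with a transparent geometric reason for the phenomenon; what the paper's approach buys is structural uniformity with the secant and tangent cases treated just before it.
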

\begin{proof}
As before it suffices to show the claim for $n=2$ to start a proof by induction.
We may again assume that $C$ is given by $\langle X,X\rangle=0$ and the line $\ell$
is given by $\langle X,L\rangle=0$ with $L=(0,0,1)^\top$. Suppose that the points
$P_1,\ldots,P_4$ on $\ell$ satisfy the closing condition with respect to $C$.
It is clear that the points $P_1,\ldots,P_4$ satisfy the closing condition with respect to each member of the three-dimensional
bundle $\mathcal B$ of conics 
$$
\Bigl\langle\begin{pmatrix} x_1-a_1x_3\\x_2-a_2x_3\\r x_3\end{pmatrix}, \begin{pmatrix} x_1-a_1x_3\\x_2-a_2x_3\\r x_3\end{pmatrix}\Bigr\rangle=0.
$$
Each of these conics  intersects $\ell$ in the same points $(1,\pm i,0)$ as $C$.
On the other hand, the bundle $\mathcal C$ of conics through $(1,\pm i,0)$ is precisely
 the bundle $\mathcal B$.
\end{proof}
In particular, for the case of a circle $C$ we obtain:
\begin{corollary}
Let $C$ and $D$ be circles and $\ell$ their radical axis. 
If the points $P_1,\ldots,P_{2n}$ on the line $\ell$ have the closing property with respect
to $C$, then they also have the closing property with respect to $D$ (see Figure~\ref{fig-radical}).
\end{corollary}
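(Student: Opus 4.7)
The plan is to reduce the corollary to the three preceding results on pairs of conics that share a common intersection with $\ell$. The essential step is to verify that when $\ell$ is the radical axis of the two circles $C$ and $D$, then $C$ and $D$ meet $\ell$ in exactly the same set of points, whether in two distinct real points, a single real point of tangency, or a pair of complex conjugate points.

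For this key step I would argue as follows. By definition of the radical axis, every point $X\in\ell$ has equal power with respect to $C$ and to $D$. In particular, if $X\in\ell\cap C$, then the power of $X$ with respect to $C$ vanishes, hence so does the power with respect to $D$, and therefore $X\in D$; the symmetric statement holds as well. The same argument extends to complex points, since the power is a polynomial expression in the coordinates. Equivalently, and more conveniently in the projective setting used in the paper, every circle passes through the two circular points at infinity $I=(1,i,0)$ and $J=(1,-i,0)$, so $C$ and $D$ generate a pencil of circles through $I$ and $J$ whose members all share $\ell$ as radical axis and therefore cut $\ell$ in a common pair of points.

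Once this is established, the corollary splits into three cases according to the geometric type of $\ell\cap C=\ell\cap D$. If $\ell$ meets $C$ in two distinct real points $R,S$, then $D$ also passes through $R$ and $S$, and the preceding proposition on conics through $R$ and $S$ applies. If $\ell$ is tangent to $C$ at a point $R$, then $D$ is tangent to $\ell$ at the same point $R$, and the proposition on conics sharing a tangency with $\ell$ applies. Finally, if $\ell$ does not meet $C$ in real points, then $C$ and $D$ have the same pair of complex conjugate intersections with $\ell$ and therefore both belong to the three-dimensional bundle of conics with that complex intersection pair, so that the preceding theorem applies directly.

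The only substantive step in this plan is the identification of $C\cap\ell$ with $D\cap\ell$; the remainder is case bookkeeping. I therefore expect no genuine obstacle: the corollary is essentially a geometric reading of the three preceding results in the special case that $C$ and $D$ are circles and $\ell$ is their radical axis.
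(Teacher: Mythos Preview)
Your proposal is correct and matches the paper's intent. The paper gives no separate proof for this corollary: it is stated with the transition ``In particular, for the case of a circle $C$ we obtain'' immediately after the three results on conics that meet $\ell$ in two real points, are tangent to $\ell$, or meet $\ell$ only in complex points. Your observation that the radical axis satisfies $C\cap\ell=D\cap\ell$ (over $\mathbb{C}$), followed by invoking whichever of the three preceding results applies, is exactly the argument the paper leaves implicit.
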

\begin{figure}[ht!]
\begin{center}
\includegraphics{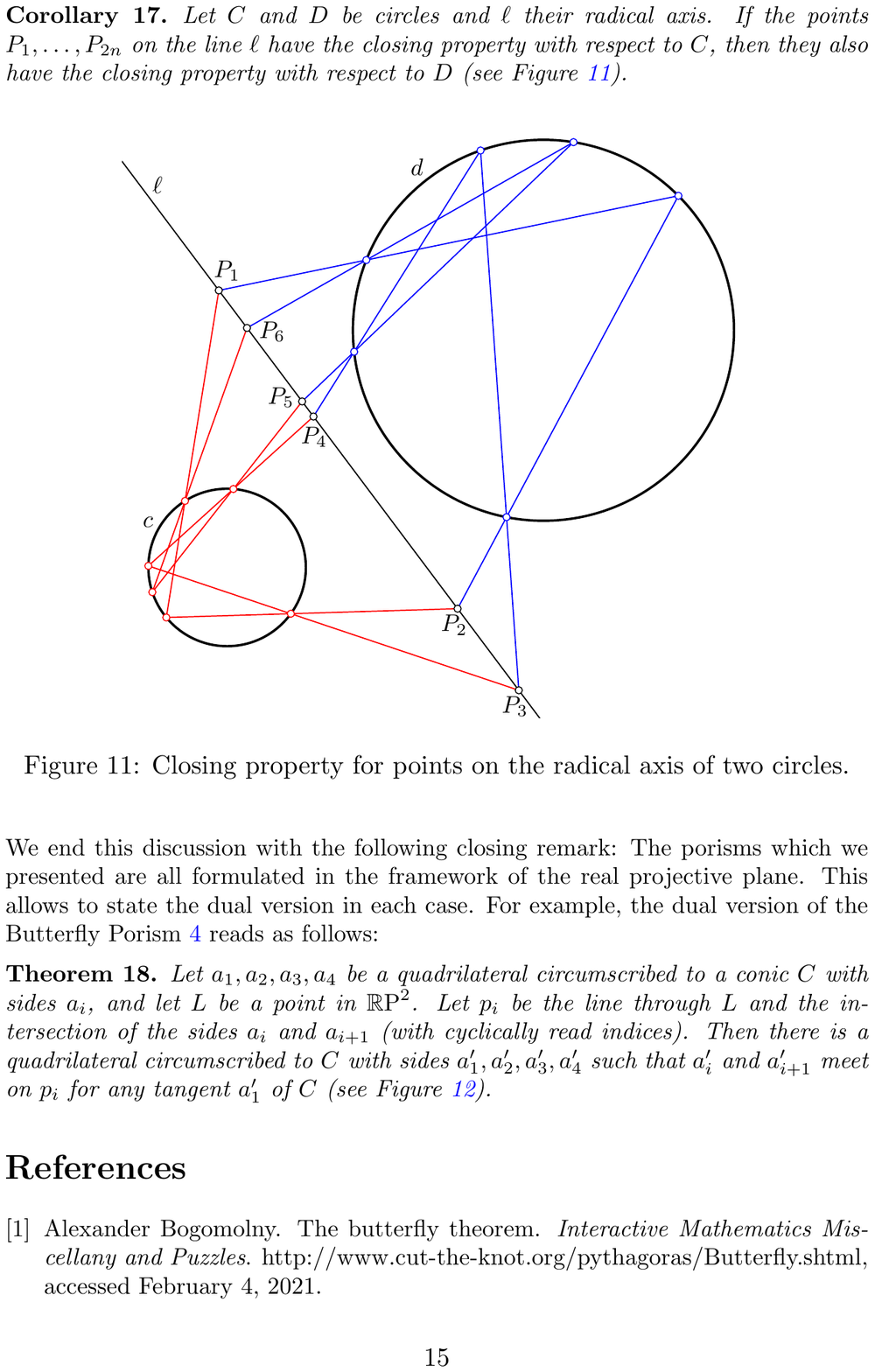}
\caption{Closing property for points on the radical axis of two circles.}\label{fig-radical}
\end{center}
\end{figure}

We end this discussion with the following closing remark: The porisms 
which we presented are  all formulated in the framework of the real projective plane.
This allows to state the dual version in each case. For example, the dual
version of the Butterfly Porism~\ref{thm-kocik} reads as follows:
\begin{theorem}
Let $a_1,a_2,a_3,a_4$ be a quadrilateral circumscribed to a conic $C$ with sides $a_i$, and let $L$ be
a point in $\mathbb R\!\operatorname{P}^2$. Let $p_i$ be the line through $L$ and the intersection of the
sides $a_i$ and $a_{i+1}$ (with cyclically read indices). Then there is a 
quadrilateral circumscribed to $C$ with sides $a'_1,a'_2,a'_3,a'_4$ such that $a'_i$ and $a'_{i+1}$ meet on $p_i$
for any tangent $a'_1$ of $C$ (see Figure~\ref{fig-dualbutter}).
\end{theorem}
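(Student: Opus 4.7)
The plan is to observe that the statement is the exact projective dual of the Butterfly Porism (Theorem~\ref{thm-kocik}) and to invoke the principle of duality. Recall that duality in $\mathbb R\!\operatorname{P}^2$ swaps points and lines, preserves incidence, sends a non-degenerate conic $C$ to a non-degenerate conic $C^\ast$, and identifies the tangent lines of $C$ with the points of $C^\ast$. Since Theorem~\ref{thm-kocik} has already been established in a purely projective framework (no reliance on M\"obius transformations or affine notions), it is valid to dualise it without restriction.

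The main step is to spell out the dictionary and verify that the two theorems express the same incidence statement in dual languages. An inscribed quadrilateral $A_1,A_2,A_3,A_4$ of $C$ dualises to a circumscribed quadrilateral with sides $a_1,a_2,a_3,a_4$ tangent to $C$; each side $A_iA_{i+1}$ dualises to the vertex $a_i\cap a_{i+1}$; a point $P_i$ lying on the side $A_iA_{i+1}$ dualises to a line $p_i$ passing through the vertex $a_i\cap a_{i+1}$; the collinearity of $P_1,\ldots,P_4$ on a line $\ell$ dualises to the concurrency of $p_1,\ldots,p_4$ at the point $L$; and the choice of a new starting vertex $A_1'\in C$ dualises to the choice of a new starting tangent $a_1'$ of $C$.

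Once this dictionary is in place, the conclusion of Theorem~\ref{thm-kocik}, namely that the inscribed broken line through $A_1',A_2',A_3',A_4'$ closes up for every admissible starting vertex $A_1'$, translates directly into our claim that the circumscribed chain $a_1',a_2',a_3',a_4'$ closes up for every admissible starting tangent $a_1'$. No computation needs to be carried out. The only place where care is required is the bookkeeping of admissible configurations, for instance the dual of the degenerate situations in which $\ell$ is tangent to or misses $C$; these translate into the positions of $L$ relative to $C$, and are already covered by the corresponding cases in Section~\ref{sec-kocik}. I therefore expect the main, and essentially only, obstacle to be the clean presentation of this dictionary rather than any substantive argument.
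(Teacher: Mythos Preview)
Your proposal is correct and matches the paper's approach exactly: the paper does not give a separate proof of this theorem but merely presents it as the dual version of the Butterfly Porism (Theorem~\ref{thm-kocik}), relying on the principle of duality in $\mathbb R\!\operatorname{P}^2$. Your write-up simply makes explicit the point--line dictionary that the paper leaves to the reader.
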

\begin{figure}[ht!]
\begin{center}
\includegraphics{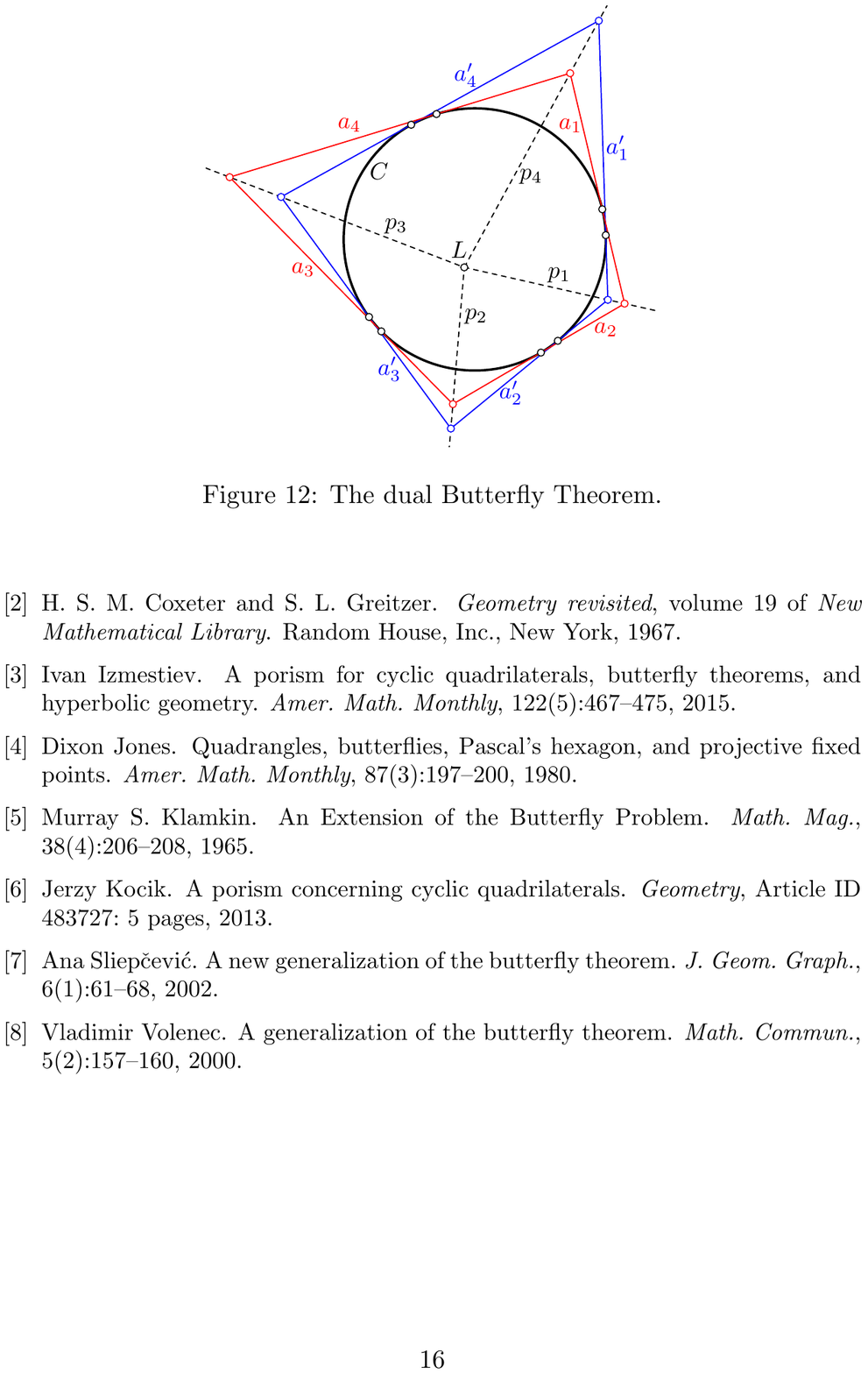}
\caption{The dual Butterfly Theorem.}\label{fig-dualbutter}
\end{center}
\end{figure}

\bibliographystyle{plain}\vspace*{-3mm}

\begin{thebibliography}{1}

\bibitem{ctk}
Alexander Bogomolny.
\newblock The butterfly theorem.
\newblock {\em Interactive Mathematics Miscellany and Puzzles}.
\newblock http://www.cut-the-knot.org/pythagoras/Butterfly.shtml, accessed
  February 4, 2021.

\bibitem{coxeter}
H.~S.~M. Coxeter and S.~L. Greitzer.
\newblock {\em Geometry revisited}, volume~19 of {\em New Mathematical
  Library}.
\newblock Random House, Inc., New York, 1967.

\bibitem{Izmestiev}
Ivan Izmestiev.
\newblock A porism for cyclic quadrilaterals, butterfly theorems, and
  hyperbolic geometry.
\newblock {\em Amer. Math. Monthly}, 122(5):467--475, 2015.

\bibitem{jones}
Dixon Jones.
\newblock Quadrangles, butterflies, {P}ascal's hexagon, and projective fixed
  points.
\newblock {\em Amer. Math. Monthly}, 87(3):197--200, 1980.

\bibitem{Klamkin}
Murray~S. Klamkin.
\newblock An {E}xtension of the {B}utterfly {P}roblem.
\newblock {\em Math. Mag.}, 38(4):206--208, 1965.

\bibitem{Kocik}
Jerzy Kocik.
\newblock A porism concerning cyclic quadrilaterals.
\newblock {\em Geometry}, Article ID 483727:\ 5 pages, 2013.

\bibitem{Ana}
Ana Sliep\v{c}evi\'{c}.
\newblock A new generalization of the butterfly theorem.
\newblock {\em J. Geom. Graph.}, 6(1):61--68, 2002.

\bibitem{Volenec}
Vladimir Volenec.
\newblock A generalization of the butterfly theorem.
\newblock {\em Math. Commun.}, 5(2):157--160, 2000.

\end{thebibliography}

\end{document}